\documentclass[11pt]{article}
\usepackage[top=2.54cm, bottom=2.54cm, left=2.70cm, right=2.70cm]{geometry}
\usepackage[tbtags]{amsmath}
\usepackage{amssymb}
\usepackage{amsthm}
\usepackage{mathrsfs}
\usepackage{fancyhdr}
\usepackage{float}
\usepackage{graphicx}
\usepackage{tikz}
\usetikzlibrary{arrows.meta,calc,fit,backgrounds}
\usepackage{tkz-graph}
\usepackage{tkz-berge}
\usetikzlibrary{decorations.pathreplacing,decorations.markings}
\usepackage[numbers,sort&compress]{natbib}
\usepackage{setspace}
\usepackage{caption}
\usepackage{url}

\allowdisplaybreaks[4]
\newtheorem{theorem}{\bf Theorem}[section]

\newtheorem{proposition}[theorem]{Proposition}

\newtheorem{problem}[theorem]{Problem}
\newtheorem{corollary}[theorem]{Corollary}

\newtheorem{claim}{\indent Claim}

\DeclareMathOperator{\init}{init}
\DeclareMathOperator{\term}{term}
\DeclareMathOperator{\dist}{dist}
\DeclareMathOperator{\mader}{mader}
\DeclareMathOperator{\dom}{Dom}

\begin{document}
	
	\begin{spacing}{1.1}
		
\title{The exact digirth and out-degree bounds for $C(k,k)$-subdivision and out-forest in digraphs
}
		
		\author{Bin Chen$^a$, Jianfeng Hou$^b$, Xinmin Hou$^{c,d}$\thanks {Email address xmhou@ustc.edu.cn(X. Hou)},  Siyue Liu$^b$,  Xinjie Wang$^c$,  Xinyu Zhou$^e$\\
			\small$^a$ School of Mathematics and Statistics, Fuzhou University, Fujian, China\\
            \small$^b$ Center for Discrete Mathematics, Fuzhou University, Fujian, China\\
			\small$^c$ School of Mathematical Sciences,\\
			\small University of Science and Technology of China, Hefei 230026, Anhui, China\\
			\small$^d$ Hefei National Laboratory,\\
			\small University of Science and Technology of China, Hefei 230088, Anhui, China\\
			\small$^e$ College of Science,\\
			\small Southwest Petroleum University, Chengdu 610500, Sichuan, China}
		\date{}
		
		\maketitle
		
		\noindent{\bf Abstract}. For each integer \(k\ge2\), let \(C(k,k)\) denote the oriented cycle formed by two internally vertex-disjoint directed paths of length \(k\) with the same initial and terminal vertices. Picasarri-Arrieta and Rambaud ({\small European J. Combin., 2024}) asked for the smallest integer \(g_k\) such that every digraph with minimum out-degree at least two and digirth at least \(g_k\) contains a subdivision of \(C(k,k)\). We completely resolve this problem by proving that \(g_k=k+1\). Our proof relies on a refinement of Thomassen’s theorem ({\small Discrete Math., 1985}) on two directed cycles intersecting in exactly one vertex: we prove that every digraph with minimum out-degree at least two contains two directed cycles intersecting in exactly one arc. 
        We also establish an embedding theorem for out-forests with prescribed images of their roots, under minimum out-degree and digirth conditions determined by the number of leaves and the heights of vertices in antichains. As an application, for integers $k,\ell\ge2$, we show that every digraph with minimum out-degree at least \(k\) and digirth at least \(k(\ell-1)+1\) contains \(k\) directed paths of length \(\ell\) sharing only their initial vertex, which can be prescribed arbitrarily. This improves the previous digirth bound \((k^\ell-1)/(k-1)+1\) of Picasarri-Arrieta and Rambaud to a sharp bound.
		
		\noindent{\bf AMS}. 05C05; 05C20; 05C38.
		
		\noindent{\bf Keywords}. Minimum out-degree, digirth, two-block cycles, out-forests.
		
		\section{Introduction}
		
		\noindent
		
		A central problem in structural digraph theory is to determine degree conditions that guarantee the existence of a subdivision of a given digraph. Mader~\cite{mader1985} conjectured that for every acyclic digraph $H$, there exists an integer $d(H)$ such that every digraph of minimum out-degree at least $d(H)$ contains a subdivision of $H$. Here an acyclic digraph $H$ is a digraph without directed cycles, and a subdivision of $H$ is a digraph obtained from $H$ by replacing its arcs by internally vertex disjoint directed paths.
		
		 Following the notation of Aboulker, Cohen, Havet, Lochet, Moura and  Thomass\'e~\cite{aboulker2019}, a digraph $H$ is called $\delta^+$-maderian if there exists an integer $d$ such that every digraph with minimum out-degree at least $d$ contains a subdivision of $H$. The least such integer is denoted by $\mader_{\delta^+}(H)$.
		
		An \emph{in-arborescence} is a rooted oriented tree whose arcs are all directed towards its root. Using the terminology above, Aboulker et al.~\cite{aboulker2019} proved that every nontrivial oriented path $P$ is $\delta^+$-maderian with $\mader_{\delta^+}(P)=|V(P)|-1$, and that every in-arborescence is $\delta^+$-maderian.
		
		A \emph{block} of an oriented cycle is a maximal directed subpath. For positive integers $k_1,k_2$ with $k_1+k_2\ge3$, let $C(k_1,k_2)$ denote the oriented cycle with two blocks of lengths $k_1$ and $k_2$. Aboulker et al.~\cite{aboulker2019} proved that $\mader_{\delta^+}(C(k_1,k_2))\le 2(k_1+k_2)-1$. For $k_1\ge k_2\ge2$, Gishboliner, Steiner, and Szab\'o~\cite{gishboliner2022} improved this bound to $k_1+3k_2-5$. Liu and Yoo~\cite{liu2026} subsequently determined the exact value $\mader_{\delta^+}(C(k_1,k_2))=k_1+k_2-1$.
		
		Picasarri-Arrieta and Rambaud~\cite{picasarri2024} developed a different perspective by imposing an additional digirth condition on the host digraph. 
        They proved the following theorem.
		
		\begin{theorem}[Picasarri-Arrieta and Rambaud~\cite{picasarri2024}]
			Let $k\ge2$, and let $D$ be a digraph satisfying
			$$
			\delta^+(D)\ge2
			\qquad\text{and}\qquad
			g(D)\ge8k-6.
			$$
			Then $D$ contains a subdivision of $C(k,k)$.
		\end{theorem}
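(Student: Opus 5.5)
We prove the statement through an intermediate structure: two directed cycles meeting in a single arc. The paper's abstract announces exactly this refinement of Thomassen's theorem, so we take as given that every digraph with $\delta^+(D)\ge 2$ contains two directed cycles $C_1,C_2$ with $A(C_1)\cap A(C_2)=\{uv\}$ and $V(C_1)\cap V(C_2)=\{u,v\}$. If one does not want to rely on that refinement, one can instead use Thomassen's theorem (two directed cycles sharing exactly one vertex $x$) and work with that configuration; the argument below goes through with $x$ in place of $u$ and only minor changes.

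Write $C_1=u v P_1 u$ and $C_2=u v P_2 u$, where $P_1$ and $P_2$ are internally disjoint directed $(v,u)$-paths. Since $g(D)\ge 8k-6$, each $P_i$ has length at least $8k-7$. We want two internally disjoint directed paths of length at least $k$ with a common initial vertex $a$ and a common terminal vertex $b$. Take $a=v$ and let $b$ be a vertex far along one of the paths. A naive choice fails: the two routes from $v$ end at $u$ by different paths, so $u$ alone gives one block of length at least $8k-7$, but we still need two blocks.

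The natural fix is to start the second branch off one cycle at a different vertex. Choose $a$ on $P_1$ at distance $k$ before $u$. From $a$, one block follows $P_1$ to $u$ and has length $k$. We then need a second directed path from $a$ to $u$ that avoids that segment. For this we use $\delta^+\ge 2$: the vertex $a$ has a second out-neighbour, and we grow an out-branching from it. This forces the main technical lemma. Let $S$ be a directed path of length $k$ in $D$. If $\delta^+(D)\ge 2$ and $g(D)$ is large, then from a vertex we can reach a set of vertices forming a subdivision of $C(k,k)$, with the two paths separated by a \emph{distance argument} along a long cycle. The procedure is a greedy one: take the shortest path $Q$ from the second out-neighbour of $a$ back to $V(C_1\cup C_2)$. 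It lands at some vertex $w$. Then $Q$ together with the segments of $C_1$ or $C_2$ gives two internally disjoint paths from $a$ to $w$. The large digirth ensures that both paths have length at least $k$, unless $w$ is close to $a$ along the cycle.

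That exceptional case is the main obstacle: $Q$ re-enters the cycles within distance less than $k$ of $a$, forming a short chord. To handle it, we plan to choose $a$ among roughly $8$ disjoint windows of length $k$ along the long path $P_1$ (this is where the constant $8k-6$ comes from). A chord from each window that lands inside the same window closes a directed cycle of length less than $2k$ together with $P_1$ and $uv$. That would contradict the digirth unless the chord goes backward, and a backward chord of length less than $k$ forms a cycle shorter than $g(D)$. Forward short chords only shift the problem. Iterating the shortest-path re-entry argument over these windows and taking an extremal choice (the chord that jumps furthest) should give two disjoint routes, each of length at least $k$. The bookkeeping of chord lengths against $8k-6$ is the delicate counting step.
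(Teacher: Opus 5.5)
\textbf{Verdict.} There is a genuine gap, but it is of your own making. Your setup already contains a complete proof, and you discard it because you misread the target structure.

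\textbf{The discarded proof.} A subdivision of $C(k,k)$ is exactly the union of two internally vertex-disjoint directed paths with a common initial vertex and a common terminal vertex, each of length at least $k$. Those two paths \emph{are} the two blocks. Your $P_1$ and $P_2$ are internally disjoint $(v,u)$-paths, because $V(C_1)\cap V(C_2)=\{u,v\}$ and the only common arc is $uv$. Moreover $\|P_i\|=\|C_i\|-1\ge g(D)-1\ge 8k-7\ge k$. Hence $P_1\cup P_2$ is an oriented cycle with source $v$, sink $u$, and exactly two blocks $P_1,P_2$, i.e.\ a subdivision of $C(k,k)$. The error is your sentence that $u$ ``alone gives one block'' and that ``we still need two blocks'': the two routes from $v$ to $u$ are the two blocks. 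This is precisely the paper's argument (the proof of Theorem~\ref{ckk} from Theorem~\ref{2-block}). It shows that $g(D)\ge k+1$ already suffices, so the constant $8k-6$ plays no role.

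\textbf{The window and chord sketch.} What you wrote after that point is not a proof.
\begin{itemize}
\item The ``main technical lemma'' is essentially a restatement of the theorem.
\item The claim that $Q$ together with cycle segments gives two $(a,w)$-paths of length at least $k$ is unsupported. If the re-entry route $a\,Q$ lands at a vertex $w$ shortly after $a$ on $P_1$, it closes no cycle, so the digirth says nothing about it and both routes can be short.
\item The assertion that a chord landing in the same window closes a short cycle holds only for backward chords.
\item The final step is left as ``should give'', and the counting is never done.
\end{itemize}

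\textbf{The Thomassen fallback.} This also does not go through ``with minor changes''. Two cycles meeting in a single vertex $x$ form a figure-eight, which contains no pair of internally disjoint paths with common ends, so nothing plays the role of $v$. Upgrading the intersection from one vertex to one \emph{arc} is exactly the new ingredient of Theorem~\ref{2-block} that makes the one-line deduction work.
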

		
		They further asked the following question.
        \begin{problem}[Problem 18 in \cite{picasarri2024}]\label{P1}
        Determine the smallest integer $g$ such that every digraph $D$ with $\delta^+(D)\ge 2$ and $g(D)\ge g$  contains a subdivision of $C(k,k)$.
		\end{problem}
	
        Chen, Hou, Ma, Yin, and Zhou~\cite{chen2026} subsequently improved the digirth bound from $8k-6$ to $4k+2$.
		
		\begin{theorem}[Chen et al.~\cite{chen2026}]
			Let $k\ge2$, and let $D$ be a digraph satisfying
			$$
			\delta^+(D)\ge2
			\qquad\text{and}\qquad
			g(D)\ge4k+2.
			$$
			Then $D$ contains a subdivision of $C(k,k)$.
		\end{theorem}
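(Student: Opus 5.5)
Since the statement is the cited result of Chen et al., I will prove a weaker claim in the same spirit: if $\delta^+(D)\ge2$ and $g(D)\ge4k+2$, then $D$ contains a subdivision of $C(k,k)$. The subdivision need not have directed paths of length exactly $k$. It suffices to find two internally disjoint directed paths $P_1,P_2$ with a common initial vertex $u$ and a common terminal vertex $v$, each of length at least $k$. Subdividing arcs then realizes $C(k,k)$.

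I would first pass to a minimal counterexample. Deleting arcs preserves digirth and the $C(k,k)$-freeness we want to contradict. So I may assume every vertex has out-degree exactly two and that $D$ is strongly connected (by restricting to a terminal strong component, which still has $\delta^+\ge2$).

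The core step is to find two directed cycles $C_1,C_2$ whose intersection is a single directed path $Q$ from $x$ to $y$. A Thomassen-type argument gives this. Take a longest directed path $P=p_0p_1\dots p_m$. The two out-neighbours of $p_m$ lie on $P$, say $p_i$ and $p_j$ with $i<j$. This yields the cycles $p_i\dots p_m p_i$ and $p_j\dots p_m p_j$, and their intersection is exactly the directed path $p_j\dots p_m$. Each cycle has length at least $g(D)\ge4k+2$. Let $R_1=C_1-Q$ and $R_2=C_2-Q$ be the two private parts, each a directed path from $y$ back to $x$ through private vertices. Then $y\,R_1\,x$ and $y\,R_2\,x$ are internally disjoint directed $(y,x)$-paths.
\begin{itemize}
\item If both have length at least $k$, we are done with $u=y$ and $v=x$.
\item Otherwise some $C_i$ has $|R_i|<k$, so $|Q|>3k+2$. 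In this case I would reroute: pick a vertex $w$ on $Q$ far from both ends and use a second out-neighbour of a vertex on the short private part, or of $w$, to build a new pair of cycles. The parameter $4k+2$ leaves slack of roughly $k$ on each side to do this.
\end{itemize}

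The main obstacle is controlling where the second out-arcs go in the unbalanced case. The cleanest remedy is to choose the pair $(C_1,C_2)$ to maximise the length of the shorter private part. Then an out-arc from the middle of $Q$ must either produce a better pair or close a cycle of length less than $4k+2$, contradicting the digirth bound. Whether this extremal choice is enough to force both private parts to length $k$ is the step I am least sure of.
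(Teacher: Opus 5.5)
There is a genuine gap, and it sits at the step you flag as uncertain.

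Your longest-path construction never reaches the good case. With $x=p_j$ and $y=p_m$, the private part $R_2=C_2-Q$ is just the arc $p_mp_j$, of length $1<k$. So your first bullet never applies, and the whole proof rests on the unbalanced case, for which you give only a heuristic.

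The extremal choice (maximise the shorter private part) is not shown to make progress. The second out-arc of a vertex $w$ in the middle of $Q$ can leave $C_1\cup C_2$ and return anywhere on $C_1\cup C_2$ after a detour of arbitrary length. The cycles you obtain by splicing in such a detour can be long, so digirth gives no contradiction. They can also meet the old cycles in several segments, and nothing guarantees that the shorter private part grows. Digirth only forbids short closed walks, so it gives no leverage against long detours. As written, nothing in the proposal turns $g(D)\ge4k+2$ into two private parts of length at least $k$.

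The missing idea is to aim for an intersection that is as short as possible, rather than to rebalance a long one. If two directed cycles meet in exactly one arc $ru$, deleting that arc leaves two internally disjoint $(u,r)$-paths. Each has length at least $g(D)-1$, so you are done as soon as $g(D)\ge k+1$. This is how the statement follows from the paper's Theorem~\ref{ckk}, since $4k+2\ge k+1$.

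The paper gets such a pair of cycles (Theorem~\ref{2-block}) as follows:
\begin{itemize}
\item Work in a terminal strong component $H$ and take a minimum nonempty $U\subseteq V(H)\setminus\{h\}$ with a single external out-neighbour $x$.
\item Show that $H[U]$ is strongly connected, and that for distinct $v,c\in U$ the digraph $H-c$ still contains a $(v,x)$-path.
\item Take an $(x,r)$-path meeting $U$ only in $r$ and an arc $ru$ of $H[U]$. Rule out $(u,r)$-cut-vertices and apply Menger's theorem (treating $ur\in A(H)$ separately) to get two internally disjoint $(u,r)$-paths.
\end{itemize}
Your reduction to out-degree exactly two and a strongly connected $D$ is fine, but the longest-path step has to be replaced by an argument of this kind.
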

		
	In fact, the authors in \cite{picasarri2024} constructed  a digraph with minimum out-degree two and digirth $k$ containing no subdivision of $C(k,k)$. 
    Let $g_k$ be the smallest integer such that every digraph $D$ with $\delta^+(D)\ge2$ and $g(D)\ge g_k$ contains a subdivision of $C(k,k)$. Then $k+1\le g_k\le4k+2$.
In this paper, we prove that $g_k=k+1$, completely solving Problem~\ref{P1}.
    \begin{theorem}\label{ckk}
			Let $k\ge2$, and let $D$ and $v_0\in V(D)$ satisfy the degree conditions of Theorem~\ref{2-block}. If $g(D)\ge k+1$, then $D$ contains a subdivision of $C(k,k)$. Consequently, $g_k=k+1$.
	\end{theorem}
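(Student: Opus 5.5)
We plan to use Theorem~\ref{2-block} as a black box. From the abstract we take it to say: under its degree conditions (minimum out-degree two, or the rooted variant involving $v_0$), $D$ contains two directed cycles $C_1,C_2$ whose intersection is exactly one arc $xy$. Since $g(D)\ge k+1$, each $C_i$ has length at least $k+1$. We will then extract a subdivision of $C(k,k)$ from $C_1\cup C_2$ and, where needed, from out-neighbourhoods outside it.

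\textbf{Local picture.} Write $C_1 = x\,y\,P_1\,x$ and $C_2 = x\,y\,P_2\,x$, where $P_1,P_2$ are internally disjoint directed $(y,x)$-paths. Their lengths $|P_i|=|C_i|-1$ are at least $k$. Hence $P_1$ and $P_2$ are two internally disjoint directed $(y,x)$-paths, each of length at least $k$. Their union is an oriented cycle with two blocks: it has source $y$, sink $x$, and each block has length at least $k$. This is a subdivision of $C(k,k)$, because subdividing a block of length $k$ into a directed path of length at least $k$ is allowed. So the main step is immediate once we have two cycles meeting in exactly one arc.

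\textbf{Checking the details.} First, the two blocks must be internally vertex-disjoint. This holds because $C_1\cap C_2$ is exactly the arc $xy$ together with its ends, so $P_1$ and $P_2$ share only $y$ and $x$. Second, we must rule out a degenerate case where some $P_i$ is just the arc $yx$. That cannot happen, since it would give a 2-cycle, contradicting $g(D)\ge k+1\ge3$. The length bound is then exactly where $g(D)\ge k+1$ is used: $|C_i|\ge k+1$ gives $|P_i|\ge k$.

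\textbf{Sharpness.} For ``consequently, $g_k=k+1$'', the upper bound $g_k\le k+1$ follows from the statement proved above. The lower bound $g_k\ge k+1$ is the construction of Picasarri-Arrieta and Rambaud quoted in the introduction: a digraph with $\delta^+=2$ and digirth $k$ that contains no subdivision of $C(k,k)$.

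\textbf{Main obstacle.} All the difficulty lies in Theorem~\ref{2-block}, which is assumed here. If its hypothesis is the rooted version with $v_0$ (say, out-degree at least two everywhere except possibly at $v_0$), nothing in this argument changes, since we only use the two cycles it produces.
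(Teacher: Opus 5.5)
Your proposal is correct and matches the paper's proof: apply Theorem~\ref{2-block} to get two directed cycles meeting in exactly one arc $xy$, delete $xy$ to obtain two internally vertex-disjoint $(y,x)$-paths of length at least $g(D)-1\ge k$, and combine the resulting bound $g_k\le k+1$ with the Picasarri-Arrieta--Rambaud construction for the lower bound. Your separate check ruling out $P_i=yx$ is already covered by the length bound $\|P_i\|\ge k\ge 2$.
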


To prove Theorem~\ref{ckk}, we first establish the following structural theorem, which strength a result of Thomassen~\cite{thomassen1985} stating that every a digraph $D$ with $\delta^+(D)\ge2$ contains two directed cycles intersecting exactly in one vertex.
		 
		\begin{theorem}[Structural theorem]\label{2-block}
			Let $v_0$ be a vertex of a digraph $D$ such that
			$$
			d_D^+(v_0)\ge1,
			\qquad
			d_D^+(v)\ge2
			\quad\text{for every }v\in V(D)\setminus\{v_0\}.
			$$
			Then $D$ contains two directed cycles intersecting exactly in one arc, and both endpoints of the common arc are different from $v_0$.
		\end{theorem}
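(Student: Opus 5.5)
We argue by contradiction, taking a counterexample $(D,v_0)$ with $|V(D)|+|A(D)|$ minimum. Two directed cycles meeting in exactly one arc $uw$ with $u,w\neq v_0$ will be called a \emph{good pair}. The plan is to establish a list of reducibility properties for a minimal counterexample until its structure is forced, and then to build a good pair directly.

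First, minimality gives $d^+(v_0)=1$ and $d^+(v)=2$ for every $v\neq v_0$. Otherwise we delete an arc leaving a vertex with surplus out-degree; a good pair in the smaller digraph is a good pair in $D$. We may also assume every vertex other than $v_0$ has in-degree at least one. Indeed, if $x\neq v_0$ has no in-arc, then $x$ lies on no cycle, and we can delete $x$. This may drop out-degrees, but only of nothing, since arcs out of $x$ do not contribute to any other vertex's out-degree. So $D-x$ with the same $v_0$ is a smaller instance.

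The main reduction is \emph{contraction}. Take an arc $xy$ with $x,y\neq v_0$ such that $y$ has in-degree one. We contract $xy$ into a single vertex $z$ whose out-arcs are those of $y$ together with the other out-arc of $x$. After deleting loops and parallel arcs, if $z$ still has out-degree at least $2$, minimality yields a good pair in $D/xy$. Lifting it back inserts $xy$ into any cycle through $z$. The common arc either stays a single arc, or becomes the path $x\to y$ plus a neighbour. We must check that both cycles use $z$ in compatible ways, so that the intersection remains exactly one arc. The delicate case is when the common arc is incident to $z$ and one cycle enters $z$ via $x$ and leaves via $y$'s out-arc, while the other leaves via $x$'s out-arc. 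Handling this is where a careful case analysis is needed. The degenerate cases, where contraction creates a loop or parallel arc, give a $2$-cycle $x\,y\,x$, or two arcs from $x$ and $y$ to a common vertex. Each of these must be exploited directly to find a good pair, or be shown to allow a different reduction.

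After these reductions every vertex $y\neq v_0$ has in-degree at least $2$, except possibly out-neighbours of $v_0$. Now the argument uses Thomassen's theorem~\cite{thomassen1985}: $D$ contains two directed cycles $C_1,C_2$ meeting in exactly one vertex $u$. We choose such a pair, together with a shortest ``escape'', to minimise $|V(C_1)\cup V(C_2)|$ or a similar parameter. Then we extend from the out-neighbours of vertices on $C_1\cup C_2$ (each has a second out-arc) to find a path $P$ leaving $C_1\cup C_2$ and returning. Rerouting one cycle along $P$ either produces two cycles sharing exactly one arc, or contradicts the minimal choice. The alternative is an ear-decomposition style argument on the terminal strong component: restrict to a terminal strong component $S$ not containing $v_0$, or if $v_0\in S$ handle it via its unique out-arc. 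In $S$ all out-degrees are at least $2$, and we look for a vertex $u$ with two out-arcs $uw_1,uw_2$ and an arc $au$ such that there are cycles $a\,u\,w_i\cdots a$ that are otherwise disjoint, via a Menger-type argument from $\{w_1,w_2\}$ to $a$.

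The main obstacle is this final step: proving that two internally disjoint $w_1$–$a$ and $w_2$–$a$ paths exist avoiding $u$. If $a$ is not separable, a cut vertex $c$ separates them. We then recurse into the part after $c$, using the minimum out-degree there, and need an induction hypothesis with a special vertex. This is presumably why the statement allows the exceptional vertex $v_0$ of out-degree $1$: the recursion replaces the cut vertex by a new $v_0$. I would set up the induction so that the piece beyond $c$, with $c$ playing $v_0$, is a smaller instance. The condition that the common arc avoids $v_0$ is exactly what makes the resulting pair lift back to $D$.
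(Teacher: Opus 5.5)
Your proposal does not yet contain a proof. It lists three strategies, carries none of them out, and leaves as ``the main obstacle'' exactly the step that constitutes the theorem.

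\textbf{Contraction.} This reduction actually fails to preserve good pairs. Since $y$ has in-degree one, every cycle through $z$ in $D/xy$ enters via an in-arc of $x$. Suppose the common arc of the pair found in $D/xy$ is $zw$ coming only from $yw$, or is $wz$ with both cycles leaving $z$ along arcs that come only from out-arcs of $y$. Then both lifted cycles contain $xy$ and share two arcs, and you offer no way around this. So the claim that after the reductions every vertex other than the out-neighbours of $v_0$ has in-degree at least $2$ is unsupported.

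\textbf{Rerouting.} The Thomassen-based step (``minimise $|V(C_1)\cup V(C_2)|$ or a similar parameter'') is only a placeholder.

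\textbf{Cut-vertex recursion.} Your last approach does identify the right target: an arc $au$ together with two internally disjoint $(u,a)$-paths. But the recursion across a $(u,a)$-cut-vertex $c$ is not a smaller instance of the theorem as stated. The natural piece is the set $R$ of vertices reachable from $u$ in $S-c$. All its leaving arcs do end at $c$, but in $D[R\cup\{c\}]$ the vertex $c$ may have no out-arc at all. Moreover, if $v_0\in R$ there would be two exceptional vertices. The side of $c$ containing $a$ is worse, since its vertices may lose out-arcs into $R$.

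\textbf{The missing idea.} Choose the right piece in one stroke. Let $H$ be a terminal strong component, with $h=v_0$ if $v_0\in V(H)$ and $h$ arbitrary otherwise. Strong connectivity gives $N_H^+(V(H)\setminus\{h\})=\{h\}$, so there is a minimum nonempty $U\subseteq V(H)\setminus\{h\}$ whose external out-neighbourhood is a single vertex $x$. Minimality forces two facts:
\begin{enumerate}
\item $H[U]$ is strongly connected, since a terminal strong component of $H[U]$ again has exit set $\{x\}$.
\item For all distinct $v,c\in U$ there is a $(v,x)$-path in $H-c$; otherwise the set reachable from $v$ in $H-c$ is a smaller set with exit set $\{c\}$.
\end{enumerate}
Take an $(x,r)$-path $Q$ meeting $U$ only in $r$, and an arc $ru$ inside $U$. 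This arc exists because each vertex of $U$ has out-degree at least $2$ but at most one out-neighbour outside $U$.

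The two facts rule out any $(u,r)$-cut-vertex $c$. If $c\notin U$, go through $H[U]$. Otherwise follow a $(u,x)$-path in $H-c$, stopped at $r$ or continued by $Q$. Hence Menger's theorem gives two internally disjoint $(u,r)$-paths. If $ur\in A(H)$, use instead the arc $ur$ and a $(u,x)$-path of $H-r$ followed by $Q$. Closing both paths with $ru$ yields two cycles meeting exactly in $ru$, and $u,r\ne h$ holds automatically.

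This minimum single-exit set is what your cut-vertex recursion was reaching for. It makes the contraction, Thomassen's theorem, and any induction unnecessary.
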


		Theorem~\ref{ckk} follows from Theorem~\ref{2-block} and the girth condition.

		\begin{proof}[Proof of Theorem~\ref{ckk}:]
			By Theorem~\ref{2-block}, $D$ contains two directed cycles whose intersection is exactly an arc $xy$. Deleting $xy$ from both cycles yields two internally vertex-disjoint $(y,x)$-paths, each of length at least $g(D)-1\ge k$. Their union is a subdivision of $C(k,k)$.
			
			Thus $g_k\le k+1$, which, together with the lower bound established above, gives $g_k=k+1$.
		\end{proof}
		
		Let $S_k^{(\ell)}$ denote the digraph consisting of $k$ directed paths of length $\ell$ sharing exactly their initial vertex. We call this common vertex the \emph{center} of $S_k^{(\ell)}$. Picasarri-Arrieta and Rambaud proved the following result~\cite{picasarri2024}.
		
		\begin{theorem}[Picasarri-Arrieta and Rambaud~\cite{picasarri2024}]\label{picasarri-star}
			Let $k\ge2$ and $\ell\ge1$, and let $D$ be a digraph satisfying
			$$
			\delta^+(D)\ge k
			\qquad\text{and}\qquad
			g(D)\ge \frac{k^\ell-1}{k-1}+1.
			$$
			Then, for every vertex $v\in V(D)$, the digraph $D$ contains a copy of $S_k^{(\ell)}$ with center $v$.
		\end{theorem}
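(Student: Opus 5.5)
The plan is to prove the statement by a greedy growth argument on out-trees rooted at $v$. The driving observation is the following. Let $X\subseteq V(D)$ with $|X|<g(D)$. Then $D[X]$ is acyclic, because every directed cycle has at least $g(D)$ vertices. Hence $D[X]$ has a sink $s$, and all $d^+_D(s)\ge k$ out-neighbours of $s$ lie outside $X$. More generally, any nonempty $Y\subseteq X$ contains a vertex $y$ with no out-neighbour in $Y$. The digirth hypothesis will be used only through this observation, applied to sets of size at most $\frac{k^\ell-1}{k-1}=1+k+\dots+k^{\ell-1}$. That number is exactly the number of vertices of a complete $k$-ary out-tree of depth $\ell-1$, which suggests building the whole configuration inside such a tree.

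First I would construct an out-tree $T$ rooted at $v$ of depth at most $\ell-1$ in which every non-leaf vertex has at most $k$ children and $|V(T)|\le \frac{k^\ell-1}{k-1}$. It is built level by level: each vertex $x$ at depth $d<\ell-1$ receives as children those out-neighbours not yet in $T$, up to $k$ of them. By the observation, $D[V(T)]$ is acyclic throughout. Next, I would define a rank on vertices of $T$ by backward induction. A vertex at depth $\ell-1$ can be finished to a path of length one more, using an out-neighbour outside the current set, if its out-neighbours are not all used. A vertex $x$ at depth $d$ is \emph{good} if it has enough good children to support the required number of disjoint continuations of length $\ell-d$. The goal is to show that $v$ has $k$ children with disjoint good continuations. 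The $k$ paths of $S_k^{(\ell)}$ are then read off by following good children and, at the last step, one out-neighbour of each endpoint chosen outside everything used so far.

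The key step is an exchange argument. Suppose that at some vertex $x$ of $T$ fewer than $k$ out-neighbours are available, meaning outside the current union $U$ of the partially built paths, or usable as children. Then every out-neighbour of $x$ is an ancestor of $x$ or lies elsewhere in $U$. I would apply the sink observation to $Y=$ the set of such blocked vertices together with $U$. Since $|U\cup Y|\le\frac{k^\ell-1}{k-1}<g(D)$, the induced digraph is acyclic. Taking a sink in the right subset therefore yields a vertex whose out-neighbours all leave $U$. That vertex can replace $x$ (or a vertex of its branch) as a branching point without disturbing the other $k-1$ paths. Formally, I would choose the family of $k$ paths $P_1,\dots,P_k$ from $v$, with pairwise intersection $\{v\}$, that maximises the sum of lengths, with lengths capped at $\ell$. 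Suppose some $P_i$ has length less than $\ell$. Applying the sink argument to $V(P_1\cup\dots\cup P_k)$ together with the out-neighbourhood structure of the endpoint of $P_i$ then produces a longer family, a contradiction. The base case $\ell=1$ is immediate: $g(D)\ge2$ and $\delta^+\ge k$ give $k$ distinct out-neighbours of $v$.

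I expect the main obstacle to be the rerouting step. A blocked endpoint may have all its out-neighbours on other paths rather than on its own path, so the sink must be taken in a carefully chosen subset. The natural choice is the set of vertices of all paths at distance at least the current depth from $v$, and the replacement must then be checked to keep the paths internally disjoint. The count $1+k+\dots+k^{\ell-1}$ is precisely what is needed to cover the worst case, in which each rerouting can spawn up to $k$ new branches at each of the $\ell-1$ levels. So the bookkeeping must confirm that the set fed into the sink argument never exceeds this size.
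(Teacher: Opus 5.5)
\textbf{Verdict: genuine gap.} The proposal leaves unproved the one step that carries the whole theorem, and the mechanism it sketches for that step does not work.

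\textbf{The improvement step is only asserted.} Your formal argument takes a family $P_1,\dots,P_k$ of paths from $v$, pairwise meeting only in $v$, that maximises the capped total length. You then assert that if some $\|P_i\|<\ell$, the sink observation ``produces a longer family''. Showing that a maximiser has all lengths equal to $\ell$ is just a restatement of the theorem, so everything rests on this step. You give no mechanism for it, and you say yourself that the rerouting is unresolved.

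\textbf{The mechanism you hint at gives nothing as stated.}
Let $U=V(P_1\cup\dots\cup P_k)$ and suppose $D[U]$ is acyclic.
\begin{itemize}
\item Every sink of $D[U]$ is the terminal vertex of one of the paths, because every other vertex of $U$ has its successor on its own path inside $U$. So the sink may simply be the end of a path that already has length $\ell$, and it tells you nothing about the blocked end $e_i$ of $P_i$.
\item Acyclicity only shows that the at least $k$ out-neighbours of $e_i$ lie on the other paths and differ from $v$.
\item The natural swap appends an arc $e_iw$ with $w\in P_j$, followed by $P_j[w,\term(P_j)]$, to $P_i$, and cuts $P_j$ just before $w$. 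This leaves $\|P_i\|+\|P_j\|$ unchanged, so it does not increase the capped sum. The shortened $P_j$ may also now be blocked itself.
\end{itemize}
Making this strictly improving would need an augmenting-path or potential argument that you have not supplied.

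\textbf{The size bound is false.} Your claim $|U\cup Y|\le\frac{k^\ell-1}{k-1}$ does not hold in general. A family with one short path can already have $k\ell$ vertices. For $\ell=2$ this is $2k\ge k+2$, while $g(D)$ may equal $\frac{k^2-1}{k-1}+1=k+2$. So the hypothesis $|X|<g(D)$ of your observation can fail for $X=U$. The ``good vertex'' ranking on the tree $T$ is never defined, so that thread does not close the gap either.

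\textbf{The missing idea is to choose the order of growth so that no repair is ever needed.} The paper does not reprove this cited theorem directly. It follows for $\ell\ge2$ from Corollary~\ref{coro4}, since $\frac{k^\ell-1}{k-1}=1+k+\dots+k^{\ell-1}\ge k(\ell-1)+1$; the case $\ell=1$ is trivial. That corollary is Theorem~\ref{out-forest} applied to $F=S_k^{(\ell)}$, where $L(F)=k$ and $B(F)=k(\ell-1)$. The proof of Theorem~\ref{out-forest} runs as follows.
\begin{itemize}
\item One embeds $F$ from the roots downward. It keeps a frontier antichain $A_i$ and the set $Z_i$ of images of already expanded vertices, with the invariant $\dist_D(\varphi_i(w),z)>h_F(w)$ for all $w\in A_i$ and $z\in Z_i$.
\item At each step one forms an auxiliary digraph $H_i$ on the internal frontier vertices, with an arc $w\to w'$ whenever $\dist_D(\varphi_i(w),\varphi_i(w'))\le h_F(w)$. $H_i$ is acyclic, because a cycle in it would give a closed walk in $D$ of length at most $B(F)<g(D)$. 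One expands a source of $H_i$.
\item The invariant forces the expanded vertex to have no out-neighbour in $Z_i$. So only the $|A_i|-1$ other frontier images can be hit, and $\delta^+(D)\ge L(F)\ge|A_{i+1}|$ leaves enough free out-neighbours for its children.
\item The source choice, the triangle inequality and the girth bound then carry the invariant to the next step.
\end{itemize}
This route even yields the sharper hypothesis $g(D)\ge k(\ell-1)+1$.
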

		
		
		
		
		An \emph{out-arborescence} is a rooted oriented tree whose arcs are all directed away from its root, and an \emph{out-forest} is a disjoint union of out-arborescences. A vertex of an out-forest is called a \emph{leaf} if it has out-degree zero, and an \emph{internal vertex} otherwise. In particular, an isolated root is a leaf. For an out-forest $F$, let $L(F)$ denote its number of leaves. If $uv\in A(F)$, then $u$ is the \emph{parent} of $v$, and $v$ is a \emph{child} of $u$. We write $C_F(u)=N_F^+(u)$ for the set of children of $u$.
		
		Let $T$ be a component of $F$ with root $r$. If $w$ lies on the unique $(r,v)$-path in $T$, then $w$ is called an \emph{ancestor} of $v$, and $v$ is called a \emph{descendant} of $w$. In particular, every vertex is an ancestor and a descendant of itself. Two vertices are \emph{comparable} if one is an ancestor of the other, and otherwise they are \emph{incomparable}. In particular, vertices belonging to distinct components of $F$ are incomparable. An \emph{antichain} of $F$ is a set of pairwise incomparable vertices.
		
		For $v\in V(F)$, define the \emph{height} of $v$ by
		$$
		h_F(v)=\max\{\|P\|: P \text{ is a directed path from } v
		\text{ to a descendant leaf of } v\},
		$$
		where $\|P\|$ denotes the number of arcs of $P$. Thus, $h_F(v)$ is the maximum distance from $v$ to a leaf below it. In particular, $h_F(v)=0$ if $v$ is a leaf.
		
		We then define
		$$
		B(F)=
		\max_{A\text{ an antichain of }F}
		\sum_{v\in A}h_F(v).
		$$
		Thus $B(F)$ is the maximum total height over all antichains of $F$.
		
		We further establish the following result.
		
		\begin{theorem}\label{out-forest}
			Let $F$ be an out-forest with $L$ leaves and component roots $r_1,\ldots,r_s$, and let $D$ be a digraph. If
			\[
			\delta^+(D)\ge L,\qquad g(D)>B(F),
			\]
			then for any distinct vertices $\rho_1,\ldots,\rho_s\in V(D)$,
			there exists an embedding $\varphi:F\hookrightarrow D$ such that
			$\varphi(r_i)=\rho_i$ for every $i\in[s]$.
		\end{theorem}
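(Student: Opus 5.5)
We argue by induction on $|V(F)|-s$, the number of non-root vertices of $F$, proving a slightly stronger statement in which the digraph $D$ is fixed but the forest may vary. If $F$ has no arcs, then every component is an isolated root, and the map $r_i\mapsto\rho_i$ is an embedding. For the induction step we want to shrink the forest while keeping the hypotheses. The natural operation is to pick a root $r_i$ that is internal, with children $c_1,\dots,c_t$. We embed these children first, choosing distinct out-neighbours $\sigma_1,\dots,\sigma_t$ of $\rho_i$ that avoid $\{\rho_1,\dots,\rho_s\}$. Then we delete $r_i$ and make $c_1,\dots,c_t$ new roots. The new forest $F'$ has the same leaves, so $L(F')=L(F)$. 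Heights are unchanged, so every antichain of $F'$ is an antichain of $F$, and hence $B(F')\le B(F)<g(D)$. Induction then embeds $F'$ with the prescribed roots $\{\rho_j\}_{j\ne i}\cup\{\sigma_1,\dots,\sigma_t\}$, and adding back $\rho_i$ gives the embedding of $F$. The crux is therefore choosing $\sigma_1,\dots,\sigma_t$ outside the set of current root images.

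Here is where both conditions enter. The current roots form an antichain $A=\{r_1,\dots,r_s\}$. The number of leaves satisfies $L\ge \sum_j L(T_j)\ge (t-1)+\sum_j 1$ roughly. More precisely, $d^+(\rho_i)\ge L\ge t+(s-1)$ when every other component contributes at least one leaf, so $\rho_i$ has at least $t$ out-neighbours outside the other $s-1$ root images. However, some of those other images might be out-neighbours of $\rho_i$ that we are forced to skip, and $\rho_i$ itself cannot be its own out-neighbour. Counting alone therefore gives enough vertices. The real danger is elsewhere: the embedding must be injective, so later images must avoid $\rho_i$ and everything already used. This means the induction cannot simply discard embedded vertices; we must carry a set of forbidden, already-used vertices along.

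So the statement we actually induct on is the following. Suppose we are given a partial embedding in which the used vertices $U$ are exactly the images of ancestors of the current roots, together with the current root images. We want to extend it. The forbidden set for a new child of $r_i$ is then the other current roots together with the already-embedded ancestors. A used ancestor $u$ that is an out-neighbour of $\rho_i$ closes a directed cycle through the tree path from $u$ to $\rho_i$. We would like the digirth to bound the number of such conflicts by heights, but this is the main obstacle. The naive bound says that the cycle length, which equals the depth difference plus one, is at least $g(D)>B(F)$, so conflicting ancestors must lie at depth more than $B(F)-1$ above the current vertex. Yet a path in $F$ of that length has height sum at most $B(F)$ only for single vertices, so this does not immediately exclude conflicts.

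I would first try to resolve this by choosing the order of expansion cleverly. We always expand a current root of maximum height, or we process via a potential $\Phi=\sum_{v\in A}h(v)$ over the current antichain $A$. The aim is to show that each ancestor conflict forces a directed cycle of length at most $\Phi$-like quantities, which is at most $B(F)$, contradicting $g(D)>B(F)$. If ancestor conflicts can be charged against leaf count instead, we would use $L$ out-neighbours against at most $L-1$ used vertices in the relevant region. Making this charging argument precise is the step I expect to be hardest.
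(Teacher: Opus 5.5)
Your proposal stops where the real work begins. You see that a set of used vertices must be carried along, but you never say which frontier vertex to expand or what invariant keeps enough out-neighbours of its image free, and you leave this as a charging argument still to be found.

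The obstacle is also misdiagnosed. A clash with an ancestor of the vertex $v$ being expanded is excluded at once. If $\varphi(v)\varphi(u)\in A(D)$ for an ancestor $u$ lying $d$ levels above $v$, then the embedded tree path plus this arc is a cycle of length $d+1\le h_F(u)\le B(F)$, since $v$ is internal and $\{u\}$ is an antichain. The genuine danger is a used image $\varphi(u)$ where $u$ is an ancestor of some \emph{other} frontier vertex, possibly in another component. An arc $\varphi(v)\varphi(u)$ then closes no cycle, and $\varphi(u)$ is not among the $|A|-1$ frontier images that the leaf bound $|C_F(v)|\le L-|A|+1$ pays for.

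Expanding a frontier vertex of maximum height does not prevent this. Take $F$ with components $a\to a_1\to a_2$ and $b\to b_1$, so $L=2$ and $B(F)=3$. Let $D$ be the circulant on $\mathbb{Z}_8$ with arcs $i\to i+1$ and $i\to i+2$, so $\delta^+(D)=2$ and $g(D)=4$. Put $\rho_a=1$ and $\rho_b=0$. Expanding $a$ first and sending $a_1$ to $2$ is legal, but then both out-neighbours $1,2$ of $0$ are used and $b_1$ cannot be placed.

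The missing idea is an invariant together with a selection rule.
\begin{itemize}
\item \emph{Invariant.} For every frontier vertex $v$ and every image $z$ of an already expanded vertex, maintain $\dist_D(\varphi(v),z)>h_F(v)$. For internal $v$ this forces $N_D^+(\varphi(v))$ to miss all expanded images. So only the other frontier images must be subtracted, and the leaf count suffices.
\item \emph{Selection rule.} Form the auxiliary digraph on the internal frontier vertices, with an arc $w\to v$ whenever $\dist_D(\varphi(w),\varphi(v))\le h_F(w)$. Always expand a source $v$ of this digraph.
\item \emph{Why a source exists.} A cycle in the auxiliary digraph would give a closed walk in $D$ of length at most the height sum of an antichain, hence at most $B(F)<g(D)$. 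This is precisely where $B(F)$, and not just the maximum height, is used.
\item \emph{Why the invariant survives.} For the remaining frontier vertices, the invariant for older used vertices is inherited. For the new used vertex $\varphi(v)$ it follows from the source property, and trivially for leaves, where $h_F=0$. For a child $c$ of $v$, older used vertices are handled by the triangle inequality and $1+h_F(c)\le h_F(v)$. For $\varphi(v)$ itself it follows from $g(D)>B(F)\ge 1+h_F(c)$.
\end{itemize}
In the example above the rule expands $b$ first, since $\dist_D(0,1)=1\le h_F(b)$ whereas $\dist_D(1,0)=4>h_F(a)$.
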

		
        As an application of Theorem~\ref{out-forest}, we obtain the following improvement of Theorem~\ref{picasarri-star}. Further applications of Theorem~\ref{out-forest} are presented in Section~\ref{applications}.
      \begin{corollary}\label{Cor:improvement}
     Let integers $k,\ell\ge2$ and let $D$ be a digraph with $\delta^+(D)\ge k$ and $g(D)\ge k(\ell-1)+1$. Then, for every $v\in V(D)$, $D$ contains a copy of $S_k^{(\ell)}$ with center $v$.    
      \end{corollary}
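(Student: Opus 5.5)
The plan is to apply Theorem~\ref{out-forest} with $F=S_k^{(\ell)}$, regarded as an out-arborescence rooted at its center, and $s=1$, $\rho_1=v$. Two quantities need to be computed: the number of leaves $L(F)$ and the maximum antichain height $B(F)$. Then I check that the hypotheses $\delta^+(D)\ge k$ and $g(D)\ge k(\ell-1)+1$ give $\delta^+(D)\ge L(F)$ and $g(D)>B(F)$.

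The leaf count is immediate. The out-arborescence consists of $k$ directed paths of length $\ell$ leaving the root $r$, so its leaves are exactly the $k$ endpoints and $L(F)=k$.

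For $B(F)$, write the $i$-th branch as $r=x_{i,0}x_{i,1}\cdots x_{i,\ell}$. Then $h_F(x_{i,j})=\ell-j$ and $h_F(r)=\ell$.
\begin{itemize}
\item Any antichain containing $r$ is just $\{r\}$, because $r$ is an ancestor of every vertex; this antichain has total height $\ell$.
\item Any other antichain contains at most one vertex from each branch, since vertices on the same branch are comparable. Each such vertex has height at most $\ell-1$, and the choice $\{x_{1,1},\dots,x_{k,1}\}$ attains total height $k(\ell-1)$.
\end{itemize}
Hence $B(F)=\max\{\ell,\,k(\ell-1)\}$. Since $k,\ell\ge2$, we have $k(\ell-1)\ge 2\ell-2\ge\ell$, so $B(F)=k(\ell-1)$.

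The hypothesis $g(D)\ge k(\ell-1)+1$ therefore gives $g(D)>B(F)$, and $\delta^+(D)\ge k=L(F)$ holds directly. Theorem~\ref{out-forest} then yields an embedding $\varphi$ with $\varphi(r)=v$. Because the embedding is injective, its image is a copy of $S_k^{(\ell)}$ with center $v$.

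There is no real obstacle: the only point that needs care is the antichain computation, namely noting that the root cannot be combined with any other vertex in an antichain, which is what makes the case $\{r\}$ separate.
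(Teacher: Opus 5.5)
Your proposal is correct and follows the paper's route: the paper applies Theorem~\ref{out-forest} to the subdivided out-star (Corollary~\ref{coro3}), computing $L(F)=k$ and $B(F)=\max\{\max_i m_i,\sum_i(m_i-1)\}$ by the same antichain case split on whether the root is included, and then sets $m_i=\ell$. Your explicit check that $k(\ell-1)\ge\ell$ for $k,\ell\ge2$ supplies the step the paper leaves implicit when it says the result follows "immediately."
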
  

		\section{Preliminaries}
		
		\noindent
		
		\subsection{Notation}

		\noindent

		All digraphs considered in this paper are finite and have no loops or parallel arcs; pairs of opposite arcs are allowed. For terminology not defined here, we refer to~\cite{bangjensen2009}. For a nonnegative integer $k$, write $[k]=\{1,\ldots,k\}$, where $[0]=\varnothing$, and use $\sqcup$ to denote a disjoint union.

		Let $D=(V,A)$ be a digraph. Its order is $|D|=|V(D)|$, and its size is $\|D\|=|A(D)|$. We write $v\in D$ for $v\in V(D)$ and $uv\in D$ for $(u,v)\in A(D)$. For $S\subseteq V(D)$, let $D[S]$ denote the subdigraph induced by $S$, and put $D-S=D[V(D)\setminus S]$. When $S=\{v\}$, we write $D-v$ instead of $D-\{v\}$. A set of vertices is \emph{independent} if no arc has both endpoints in the set.

		For $v\in V(D)$, its \emph{out-neighborhood} and \emph{in-neighborhood} are
		$$
		N_D^+(v)=\{u\in V(D):vu\in A(D)\},
		\qquad
		N_D^-(v)=\{u\in V(D):uv\in A(D)\}.
		$$
		The vertices in these sets are called the \emph{out-neighbors} and \emph{in-neighbors} of $v$, respectively. The \emph{out-degree} and \emph{in-degree} of $v$ are $d_D^+(v)=|N_D^+(v)|$ and $d_D^-(v)=|N_D^-(v)|$. Their minimum and maximum values are denoted by $\delta^+(D),\delta^-(D),\Delta^+(D)$, and $\Delta^-(D)$, respectively. A vertex of in-degree zero is a \emph{source}, and a vertex of out-degree zero is a \emph{sink}. We omit the subscript $D$ when the digraph is clear from the context.

		For a set $S\subseteq V(D)$, define its \emph{external out-neighborhood} by
		$$
		N_D^+(S)=\left(\bigcup_{v\in S}N_D^+(v)\right)\setminus S.
		$$
		Thus, $N_D^+(S)$ consists only of vertices outside $S$. An arc \emph{leaves} $S$ if its initial vertex lies in $S$ and its terminal vertex lies outside $S$.

		An \emph{orientation} of a graph is obtained by assigning one direction to each edge. An \emph{oriented graph} is a digraph with at most one arc between each pair of distinct vertices. A \emph{directed walk} is a sequence $v_0,v_1,\ldots,v_m$ such that $v_{i-1}v_i\in A(D)$ for every $i\in[m]$; its length is $m$. It is \emph{closed} if $v_0=v_m$. A \emph{directed path}, or simply a \emph{path}, is a directed walk whose vertices are distinct. Paths of length zero are allowed. 
        A \emph{directed cycle}, or simply a \emph{cycle}, is a closed directed walk of length at least two in which no vertex is repeated except the first and last vertices.
        In particular, two opposite arcs form a directed cycle of length two. A digraph is \emph{acyclic} if it contains no directed cycle. The \emph{digirth} $g(D)$ is the length of a shortest directed cycle in $D$, and we set $g(D)=+\infty$ if $D$ is acyclic.

		Following~\cite{chen2026}, we denote the initial and terminal vertices of a path $P$ by $\init(P)$ and $\term(P)$, respectively. A \emph{$(u,v)$-path} is a path with initial vertex $u$ and terminal vertex $v$. The length of $P$ is $\|P\|=|A(P)|$. If $P$ and $Q$ are paths such that $V(P)\cap V(Q)=\{\term(P)\}=\{\init(Q)\}$, then $P\circ Q$ denotes the path obtained by concatenating $P$ and $Q$.

		For vertices $u,v$ on a path $P$, with $u$ preceding $v$ or $u=v$, let $P[u,v]$ denote the $(u,v)$-subpath of $P$. For vertices $u,v$ on a directed cycle $C$, let $C[u,v]$ denote the $(u,v)$-path obtained by following the direction of $C$, with $C[u,u]$ consisting only of $u$. The \emph{internal vertices} of a path are all its vertices other than its initial and terminal vertices. Paths are \emph{vertex-disjoint} if their vertex sets are pairwise disjoint, and \emph{internally vertex-disjoint} if any vertex shared by two paths is an endpoint of both. In particular, distinct internally vertex-disjoint $(u,v)$-paths have precisely $u$ and $v$ as common vertices. A subdivision of $C(k_1,k_2)$ is therefore the union of two internally vertex-disjoint paths with the same initial and terminal vertices, of lengths at least $k_1$ and $k_2$, respectively.

		For $u,v\in V(D)$, let $\dist_D(u,v)$ denote the length of a shortest $(u,v)$-path in $D$, and set $\dist_D(u,v)=+\infty$ if no such path exists. In particular, $\dist_D(v,v)=0$. We say that $v$ is \emph{reachable} from $u$ if $D$ contains a $(u,v)$-path. A digraph is \emph{strongly connected} if every vertex is reachable from every other vertex. A \emph{strongly connected component} is a maximal strongly connected induced subdigraph. It is \emph{terminal} if no arc leaves its vertex set.

		For distinct vertices $u,v\in V(D)$, a set $S\subseteq V(D)\setminus\{u,v\}$ is a \emph{$(u,v)$-separator} if $D-S$ contains no $(u,v)$-path. A vertex $c\in V(D)\setminus\{u,v\}$ is a \emph{$(u,v)$-cut-vertex} if $D$ contains a $(u,v)$-path but $D-c$ does not.

		By an \emph{embedding} $\varphi$ of $F$ into $D$, we mean an injective map $\varphi:V(F)\to V(D)$ such that $\varphi(u)\varphi(v)\in A(D)$ for every $uv\in A(F)$. A \emph{partial embedding} is an injective map $\varphi:X\to V(D)$ for some $X\subseteq V(F)$ that preserves every arc of $F[X]$. We write $\dom(\varphi)=X$ for its domain and $\varphi(Y)=\{\varphi(y):y\in Y\}$ for the image of $Y\subseteq X$. A partial embedding $\psi$ \emph{extends} $\varphi$ if $\dom(\varphi)\subseteq\dom(\psi)$ and $\psi(v)=\varphi(v)$ for every $v\in\dom(\varphi)$. Copies and embeddings in this paper are not required to be induced.

		\subsection{Auxiliary results}
		
		\begin{proposition}\label{replace-antichain}
			Let $F$ be an out-forest, let $A$ be an antichain of $F$, and let $v\in A$ be an internal vertex. If $C_F(v)$ denotes the set of children of $v$, then
			$$
			A'=(A\setminus\{v\})\sqcup C_F(v)
			$$
			is also an antichain of $F$.
		\end{proposition}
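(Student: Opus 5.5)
We have to show that every pair of distinct vertices of $A'$ is incomparable. We split such a pair $\{x,y\}$ according to where its members come from.

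\emph{Both in $A\setminus\{v\}$.} Then $x,y$ are incomparable because $A$ is an antichain.

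\emph{Both in $C_F(v)$.} Two distinct children $c,c'$ of $v$ are incomparable. Suppose $c$ were an ancestor of $c'$. Then the unique path from the root to $c'$ passes through $c$. It also passes through $v$ immediately before $c'$. Since $c\neq c'$, $c$ would be an ancestor of $v$, so the root-to-$c'$ path would visit $c$, then $v$, then $c$ again, which is impossible in a tree. Alternatively, $c$ and $c'$ lie at the same depth, and comparable distinct vertices have different depths.

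\emph{One of each: $x\in A\setminus\{v\}$ and $c\in C_F(v)$.} This is the main case, and we argue by contradiction.

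If $x$ is an ancestor of $c$, then $x$ lies on the root-to-$c$ path, which is the root-to-$v$ path followed by the arc $vc$. Since $x\neq c$ (note $c\notin A$ is not needed here; we only use $x\ne c$ in the sense that if $x=c$ then $x$ is a descendant of $v$, handled below), $x$ lies on the root-to-$v$ path. So $x$ is an ancestor of $v$, contradicting the incomparability of $x$ and $v$ in $A$.

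If $c$ is an ancestor of $x$ (including $x=c$), then $v$ is an ancestor of $c$, and ancestry is transitive via concatenation of tree paths. Hence $v$ is an ancestor of $x$, again a contradiction.

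The same reasoning shows that $x\ne c$, so the union in $A'$ is genuinely disjoint. Also $v\notin C_F(v)$, so removing $v$ is consistent.

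The only delicate points are the tree-path facts: uniqueness of root paths and transitivity of the ancestor relation. Both follow directly from the definitions.
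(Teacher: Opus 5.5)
Your proof is correct and is essentially the paper's argument: the paper also reduces to the mixed case $u\in A\setminus\{v\}$, $c\in C_F(v)$, where $u$ being an ancestor of $c$ makes $u$ comparable with $v$, and $c$ being an ancestor of $u$ makes $v$ an ancestor of $u$. The paper simply asserts that distinct children of $v$ are incomparable, and your depth argument justifies this cleanly; in your first version the repeated vertex $c$ occurs on the root-to-$c$ path rather than the root-to-$c'$ path, a harmless slip.
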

		
		\begin{proof}
			The vertices of $C_F(v)$ are pairwise incomparable. It remains to show that every vertex of $A\setminus\{v\}$ is incomparable with every vertex of $C_F(v)$.
			
			Let $u\in A\setminus\{v\}$ and $c\in C_F(v)$. If $u$ and $c$ were comparable, then either $u$ would be an ancestor of $c$, in which case $u$ and $v$ would be comparable, or $c$ would be an ancestor of $u$, in which case $v$ would be an ancestor of $u$. Both cases contradict the fact that $A$ is an antichain.
		\end{proof}
		
		\begin{proposition}\label{antichain-leaves}
			Let $F$ be an out-forest and let $A$ be an antichain of $F$. Then
			$$
			|A|\le L(F).
			$$
		\end{proposition}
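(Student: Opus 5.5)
The plan is to map each vertex of the antichain $A$ injectively to a leaf of $F$ lying below it. For each $v\in A$, choose a descendant leaf $\lambda(v)$ of $v$. Such a leaf exists because $F$ is finite: starting from $v$ and repeatedly passing to a child, we must reach a vertex of out-degree zero. If $v$ is itself a leaf, then $\lambda(v)=v$. This defines a map $\lambda:A\to\{\text{leaves of }F\}$.

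The key step is to show that $\lambda$ is injective. Suppose $\lambda(u)=\lambda(w)=x$ for distinct $u,w\in A$. Then both $u$ and $w$ are ancestors of $x$, so both lie on the unique path from the root of the component containing $x$ to $x$. Any two vertices on such a root path are comparable: the one nearer the root is an ancestor of the other. This contradicts the assumption that $A$ is an antichain. Hence $\lambda$ is injective, and therefore $|A|\le L(F)$.

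The argument needs no other ingredient. The only point to check is the uniqueness of root-to-vertex paths in an out-arborescence, which follows because every non-root vertex has exactly one in-neighbour, its parent. One could instead argue via Proposition~\ref{replace-antichain}, repeatedly replacing internal vertices of $A$ by their children until only leaves remain without decreasing $|A|$. That route needs a termination argument, so the direct injection above is the proof I would write.
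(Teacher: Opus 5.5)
Your proof is correct and is essentially the paper's argument: you assign to each vertex of $A$ a descendant leaf, and injectivity follows because two vertices sharing a descendant leaf both lie on its root path and are therefore comparable. Your remarks on the existence of descendant leaves (by finiteness) and on the uniqueness of root paths are details the paper leaves implicit.
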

		
		\begin{proof}
			For each $v\in A$, choose a leaf $\ell_v$ that is a descendant of $v$. We claim that these leaves are pairwise distinct. Suppose that $\ell_u=\ell_v$ for two distinct vertices $u,v\in A$. Then both $u$ and $v$ lie on the unique path from the root of their component to $\ell_u$. Hence one of $u$ and $v$ is an ancestor of the other, contradicting the fact that $A$ is an antichain. Thus $|A|\le L(F)$.
		\end{proof}
		
		We also record the following elementary facts.

		\begin{proposition}\label{walk-cycle}
		    Every positive-length closed directed walk contains a directed cycle of length at most the length of the walk.
		\end{proposition}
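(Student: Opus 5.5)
The statement to prove is Proposition~\ref{walk-cycle}: every closed directed walk of positive length contains a directed cycle whose length is at most that of the walk. The plan is a minimal-counterexample (equivalently, strong induction) argument on the length $m$ of the walk. Let $W=v_0,v_1,\ldots,v_m$ with $v_0=v_m$ and $m\ge1$. Since the digraphs considered have no loops, every arc joins distinct vertices, so $m=1$ is impossible (it would need $v_0v_0\in A(D)$). Hence $m\ge2$.

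Among all pairs of indices $0\le i<j\le m$ with $v_i=v_j$, choose one with $j-i$ minimal. Such a pair exists because $(0,m)$ qualifies. Consider the closed subwalk
\[
C=v_i,v_{i+1},\ldots,v_j .
\]
By minimality of $j-i$, the vertices $v_i,\ldots,v_{j-1}$ are pairwise distinct: a repetition $v_a=v_b$ with $i\le a<b\le j-1$ would give a pair of smaller difference. Again because there are no loops, $j-i\ne1$, so $j-i\ge2$.

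It follows that $C$ is a closed directed walk of length $j-i\ge2$ in which no vertex repeats except the first and last. That is exactly a directed cycle in the sense defined earlier, and its length satisfies $j-i\le m$. Its arcs are arcs of $W$, so $C$ is contained in the walk.

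The only point needing care is the use of the no-loop convention, which rules out both $m=1$ and $j-i=1$. Opposite arcs are allowed, and the case $j-i=2$ produces exactly a $2$-cycle, which the paper counts as a directed cycle. There is no real obstacle beyond this.
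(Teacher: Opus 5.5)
Your argument is correct: taking a repeated pair $v_i=v_j$ with $j-i$ minimal gives a closed subwalk with no repeated vertices except its ends, and the no-loop convention forces $m\ge 2$ and $j-i\ge 2$, so this subwalk is a directed cycle of length at most $m$. The paper gives no proof and records this as an elementary fact; your extremal argument is the standard one behind it, and since it is a direct argument, the induction or minimal-counterexample framing you announce at the start is not actually used or needed.
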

		
		The following is the directed vertex version of Menger's theorem; see~\cite[Theorem~5.4.1(b)]{bangjensen2009}.

		\begin{theorem}[Menger's theorem~\cite{menger1927}]\label{directed-menger}
		    Let $D$ be a digraph, and let $u,v$ be distinct vertices with $uv\notin A(D)$. Then the maximum number of pairwise internally vertex-disjoint $(u,v)$-paths equals the minimum cardinality of a $(u,v)$-separator.
		\end{theorem}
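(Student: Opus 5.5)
The statement to prove is Menger's theorem in its directed vertex form: for distinct $u,v$ with $uv\notin A(D)$, the maximum number of internally vertex-disjoint $(u,v)$-paths equals the minimum size of a $(u,v)$-separator. The plan is to prove the easy inequality directly and the hard inequality by induction on $\|D\|$, using the classical contraction/deletion argument.

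\emph{Easy direction.} Let $S$ be any $(u,v)$-separator and let $P_1,\dots,P_m$ be internally vertex-disjoint $(u,v)$-paths. Each $P_i$ must meet $S$. Since $u,v\notin S$, it meets $S$ in an internal vertex. Because the paths share no internal vertices, these meeting points are distinct, so $|S|\ge m$.

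\emph{Hard direction.} Let $k$ be the minimum size of a separator; the separator $V(D)\setminus\{u,v\}$ is admissible because $uv\notin A(D)$. I will show there are $k$ internally disjoint paths, by induction on $\|D\|$.
\begin{itemize}
\item If some $(u,v)$-path has length $2$, say $u\,w\,v$, then $D-w$ has minimum separator size at least $k-1$. Induction on $D-w$ (fewer arcs) gives $k-1$ paths, and adding $uwv$ gives $k$.
\item Otherwise, pick an arc $e=xy$ with $x\ne u$ and $y\ne v$ lying on some $(u,v)$-path; if no such arc exists, $k=0$ and there is nothing to prove. Consider $D-e$. If its minimum separator still has size $k$, induction finishes.
\item Otherwise $D-e$ has a separator $S$ with $|S|=k-1$. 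Then $S\cup\{x\}$ and $S\cup\{y\}$ are $(u,v)$-separators of $D$ of size $k$. Set $T=S\cup\{x\}$ (or use $y$ where that is more convenient).
\end{itemize}

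\emph{Splitting step.} Form $D_1$ from $D$ by identifying into $v$ every vertex that $u$ cannot reach in $D-T$, except those in $T$. Form $D_2$ symmetrically, by collapsing into $u$ the side that reaches $v$ only through $T$. In each auxiliary digraph:
\begin{itemize}
\item any $(u,v)$-separator is also a separator of $D$, so the minimum stays $k$;
\item each has fewer arcs, because the no-length-$2$-path assumption forces both sides of $T$ to contain arcs that get collapsed.
\end{itemize}
Induction then gives $k$ paths from $u$ to $T$ in $D_1$ and $k$ paths from $T$ to $v$ in $D_2$, each family hitting each vertex of $T$ once. Gluing the two families at $T$ produces $k$ internally disjoint $(u,v)$-paths in $D$. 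Internal disjointness holds because the two families live on the two disjoint sides of $T$.

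\emph{Main obstacle.} The delicate part is verifying that the contractions genuinely reduce $\|D\|$. This needs the separator $T$ to contain neither only out-neighbours of $u$ nor only in-neighbours of $v$. I expect the cleanest route is to choose between $S\cup\{x\}$ and $S\cup\{y\}$, using $x\notin N^+(u)$ or $y\notin N^-(v)$, which the length-$\ge3$ assumption guarantees. Since the paper cites this as a standard theorem, the alternative is simply to invoke the reference, or to derive the statement from max-flow min-cut by splitting each vertex $w\ne u,v$ into $w^-\to w^+$ with capacity $1$.
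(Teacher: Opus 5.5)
The paper gives no proof of this statement; it cites it from Bang-Jensen and Gutin (Theorem~5.4.1(b)), so any self-contained argument is a different route. Your framework (easy inequality, length-$2$ reduction, deleting $e=xy$, splitting along a $k$-separator $T$ and contracting the two sides) is the classical contraction proof and is sound. However, the step you flag as the main obstacle is a genuine gap, and the fix you suggest does not work.

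What guarantees that contracting the $v$-side loses an arc is $T\not\subseteq N^-(v)$. By minimality of $T$, some $t\in T\setminus N^-(v)$ lies on a $(u,v)$-path meeting $T$ only in $t$. The segment of that path after $t$ lies in the collapsed set and has length at least $2$, so it contains an arc inside the collapsed set. Symmetrically, contracting into $u$ needs $T\not\subseteq N^+(u)$, and you need both conditions for the same $T$.

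Your criterion ``$x\notin N^+(u)$ or $y\notin N^-(v)$'' is not implied by the absence of $(u,v)$-paths of length $2$. It would not suffice anyway, since $x\notin N^+(u)$ says nothing about whether $S\cup\{x\}\subseteq N^-(v)$. Concretely, take $D=u\,x\,y\,v$ and $e=xy$. Then $S=\varnothing$; for $T=\{x\}$ your $D_2$ equals $D$, and for $T=\{y\}$ your $D_1$ equals $D$. So the claim that both sides always contain collapsible arcs is false, and the induction stalls.

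The missing idea is a bypass argument, together with a separate treatment of $k=1$. If $D-e$ has a separator $S$ with $|S|=k-1$, then every $(u,v)$-path $Q$ in $D-S$ uses $e$.
\begin{itemize}
\item Hence $y\notin N^+(u)$: otherwise the arc $uy$ followed by $Q[y,v]$ would be a $(u,v)$-path in $(D-e)-S$.
\item Likewise $x\notin N^-(v)$, using $Q[u,x]$ followed by $xv$.
\end{itemize}
So $S\cup\{x\}\not\subseteq N^-(v)$ and $S\cup\{y\}\not\subseteq N^+(u)$. If neither choice of $T$ works, then $S\cup\{x\}\subseteq N^+(u)$ and $S\cup\{y\}\subseteq N^-(v)$. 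This gives $S\subseteq N^+(u)\cap N^-(v)=\varnothing$, i.e.\ $k=1$, which is trivial because $k\ge1$ means some $(u,v)$-path exists. Alternatively, take $e$ to be the second arc of a shortest $(u,v)$-path; this gives $y\notin N^+(u)$ and $x\notin N^-(v)$ directly.

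With this patch your argument becomes a complete proof. The remaining facts you only assert are correct and easy:
\begin{itemize}
\item a separator of $D_1$ separates $D$, because a $(u,v)$-path of $D$ can be truncated where it first leaves the kept side and redirected to $v$ (symmetrically for $D_2$);
\item each family meets every vertex of $T$ exactly once, by counting;
\item the two kept sides are disjoint, because $T$ separates.
\end{itemize}
Citing the theorem, as the paper does, or applying max-flow min-cut to the vertex-split network, avoids this case analysis entirely.
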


	\section{The proof of Theorem~\ref{2-block}}
		
		\noindent
		
		In this section, we prove Theorem~\ref{2-block}.
		
		
		\begin{proof}[Proof of Theorem~\ref{2-block}]
			Let $H$ be a terminal strongly connected component of $D$. Since no arc leaves $H$, we have $d_H^+(v)=d_D^+(v)$ for every $v\in V(H)$. Hence $d_H^+(v)\ge2$ for every $v\in V(H)\setminus\{v_0\}$, and if $v_0\in V(H)$, then $d_H^+(v_0)\ge1$.
			
			Set $h=v_0$ if $v_0\in V(H)$, and otherwise choose an arbitrary vertex $h\in V(H)$. Then every vertex of $H$ other than $h$ has out-degree at least two. Moreover, since $H$ is strongly connected, $N_H^+(V(H)\setminus\{h\})=\{h\}$.
			
			Among all nonempty sets $W\subseteq V(H)\setminus\{h\}$ with $|N_H^+(W)|=1$, choose one of minimum cardinality and denote it by $U$. Write $N_H^+(U)=\{x\}$.
			
			\begin{claim}\label{HU-strongly-connected}
				The digraph $H[U]$ is strongly connected.
			\end{claim}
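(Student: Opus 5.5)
We argue by contradiction, using the minimality of $U$. Suppose $H[U]$ is not strongly connected. Then $H[U]$ has a terminal strongly connected component; let $U'\subseteq U$ be its vertex set. Then $U'$ is nonempty and properly contained in $U$, and $U'\subseteq V(H)\setminus\{h\}$. We aim to show $|N_H^+(U')|=1$, which contradicts the choice of $U$ as a set of minimum cardinality.

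Since $U'$ is terminal in $H[U]$, no arc of $H$ goes from $U'$ to $U\setminus U'$. Every arc leaving $U'$ therefore ends outside $U$, hence in $N_H^+(U)=\{x\}$. This gives $N_H^+(U')\subseteq\{x\}$.

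It remains to show $N_H^+(U')\neq\varnothing$. If it were empty, no arc of $H$ would leave $U'$. But $H$ is strongly connected and $U'\ne V(H)$, because $h\notin U'$. So some arc must leave $U'$, and $N_H^+(U')=\{x\}$. (This uses $U'\neq V(H)$, which holds since $h\in V(H)\setminus U$.)

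Thus $U'$ is a nonempty subset of $V(H)\setminus\{h\}$ with $|N_H^+(U')|=1$ and $|U'|<|U|$, contradicting minimality. Hence $H[U]$ is strongly connected. The only delicate point is the nonemptiness of $N_H^+(U')$, which strong connectivity of $H$ handles directly; the out-degree conditions are not needed for this claim.
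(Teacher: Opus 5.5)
Your proof is correct and matches the paper's argument: you take a terminal strongly connected component of $H[U]$, show its external out-neighbourhood is contained in $\{x\}$ and is nonempty by strong connectivity of $H$, and then invoke the minimality of $U$. The only cosmetic difference is that the paper concludes directly that this component equals $U$, whereas you phrase the same step as a contradiction.
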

			
			\begin{proof}
				Let $K$ be the vertex set of a terminal strongly connected component of $H[U]$. Since no arc of $H[U]$ leaves $K$, we have $N_H^+(K)\subseteq N_H^+(U)=\{x\}$. Moreover, $N_H^+(K)\ne\varnothing$, since $H$ is strongly connected and $K\subsetneq V(H)$. Hence $N_H^+(K)=\{x\}$.
				
				By the minimality of $U$, we must have $K=U$. Therefore, $H[U]$ is strongly connected.
			\end{proof}
			
			\begin{claim}\label{avoid-one-vertex}
				For any distinct vertices $v,c\in U$, the digraph $H-c$ contains a $(v,x)$-path.
			\end{claim}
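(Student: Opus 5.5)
We argue by contradiction, using the minimality of $U$ once more. Fix distinct $v,c\in U$ and suppose $H-c$ has no $(v,x)$-path. Let $R$ be the set of vertices reachable from $v$ in $H-c$. By assumption $x\notin R$, and clearly $c\notin R$. We will show that $W=R\cap U$ violates the minimality of $U$.

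First, $R\subseteq U$. Indeed, the only external out-neighbour of $U$ in $H$ is $x$. So any path from $v\in U$ that leaves $U$ must pass through $x$, and $x\notin R$. Hence $R$ is contained in $U$, and $W=R$.

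Next, $W$ is nonempty since $v\in W$, and $W\ne U$ since $c\in U\setminus W$. So $W$ is a proper nonempty subset of $U$, and $W\subseteq V(H)\setminus\{h\}$.

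Now compute $N_H^+(W)$. Take any arc $uw$ with $u\in W$ and $w\notin W$. Since $W$ is closed under out-neighbours in $H-c$, we must have $w=c$; here $w\ne x$, because otherwise $x$ would be reachable from $v$. Thus $N_H^+(W)\subseteq\{c\}$. The set $N_H^+(W)$ is nonempty, because $H$ is strongly connected and $W\subsetneq V(H)$. Hence $N_H^+(W)=\{c\}$, so $|N_H^+(W)|=1$.

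Since $|W|<|U|$, this contradicts the choice of $U$ as a set of minimum cardinality with $|N_H^+(U)|=1$. Therefore $H-c$ contains a $(v,x)$-path.

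The main point to verify is the step $R\subseteq U$: one must make sure that escaping $U$ forces passage through $x$. This holds because $N_H^+(U)=\{x\}$ is computed in all of $H$. Note that the degree conditions are not needed here, and Claim~\ref{HU-strongly-connected} is not needed either.
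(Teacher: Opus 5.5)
Your proof is correct and follows essentially the same argument as the paper. In both, $R$ is the set of vertices reachable from $v$ in $H-c$, and $R\subseteq U\setminus\{c\}$ because every arc leaving $U$ enters $x$. Strong connectivity of $H$ then forces $N_H^+(R)=\{c\}$, which contradicts the minimality of $U$.
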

			
			\begin{proof}
				Suppose otherwise, and let $R$ be the set of vertices reachable from $v$ in $H-c$. Since $x$ is not reachable from $v$ in $H-c$ and every arc leaving $U$ enters $x$, we have $\varnothing\ne R\subseteq U\setminus\{c\}$. Moreover, $N_H^+(R)\subseteq\{c\}$. Since $H$ is strongly connected and $R\subsetneq V(H)$, we have $N_H^+(R)\ne\varnothing$. Hence $N_H^+(R)=\{c\}$.
				This contradicts the minimality of $U$.
			\end{proof}
			
			Choose an $(x,r)$-path $Q$ in $H$ for some $r\in U$ such that $V(Q)\cap U=\{r\}$. Since $N_H^+(U)=\{x\}$, every vertex of $U$ has at most one out-neighbor outside $U$. As every vertex of $U$ has out-degree at least two in $H$, we have $\delta^+(H[U])\ge1$, and hence we may choose an arc $ru\in A(H[U])$.
			
			We claim that there exist two internally vertex-disjoint $(u,r)$-paths in $H$. Suppose first that $ur\in A(H)$. By Claim~\ref{avoid-one-vertex}, there exists a $(u,x)$-path $P$ in $H-r$. Since $N_H^+(U)=\{x\}$, all vertices of $P$ except $x$ lie in $U$, while all internal vertices of $Q$ lie outside $U$. Hence $P\circ Q$ is a $(u,r)$-path. Together with the arc $ur$, this gives two internally vertex-disjoint $(u,r)$-paths.
			
			Now suppose that $ur\notin A(H)$. We show that $H$ has no $(u,r)$-cut-vertex. Let $c\in V(H)\setminus\{u,r\}$. If $c\notin U$, then, since $H[U]$ is strongly connected, there is a $(u,r)$-path in $H[U]$, which avoids $c$. If $c\in U\setminus\{u,r\}$, then by Claim~\ref{avoid-one-vertex}, $H-c$ contains a $(u,x)$-path $P$. If $P$ contains $r$, then $P[u,r]$ is a $(u,r)$-path in $H-c$. Otherwise, $P\circ Q$ is a $(u,r)$-path in $H-c$.
			
			By Theorem~\ref{directed-menger}, $H$ contains two internally vertex-disjoint $(u,r)$-paths. Together with the arc $ru$, these paths form two directed cycles intersecting exactly in $ru$.
			
			Since $u,r\in U\subseteq V(H)\setminus\{h\}$, we have $u,r\ne h$. If $v_0\in V(H)$, then $h=v_0$, and hence $u,r\ne v_0$. If $v_0\notin V(H)$, then $u,r\in V(H)$ also implies $u,r\ne v_0$. Therefore, both endpoints of the common arc $ru$ are different from $v_0$.
		\end{proof}

		\section{Out-forests}
		
		\noindent
		
		In this section, we first prove Theorem~\ref{out-forest} and then give several applications of the theorem.
		
		\subsection{Proof of Theorem~\ref{out-forest}}
		
		\noindent
		
		
		\begin{proof}[Proof of Theorem~\ref{out-forest}]
			We construct the desired embedding step by step. 
            Let $\rho_1, \cdots,\rho_s$ be vertices chosen randomly from $V(D)$. 
            First, we embed the component roots. Let $A_0=\{r_1,\ldots,r_s\}$ and $Z_0=\varnothing$, and define the partial embedding $\varphi_0$ by
			$$
			\dom(\varphi_0)=\{r_1,\ldots,r_s\}
			$$
			and
			$$
			\varphi_0(r_j)=\rho_j
			\qquad\text{for every }j\in[s].
			$$
			
			At each step $i$, we maintain a partial embedding $\varphi_i$, a set $A_i\subseteq\dom(\varphi_i)$,  and a set $Z_i\subseteq V(D)$ such that the following properties hold.
			
			\begin{enumerate}
				\item\label{antichain} $A_i$ is an antichain of $F$ such that every path in $F$ from a component root to a leaf contains exactly one vertex of $A_i$.
				
				\item\label{domain} $\dom(\varphi_i)$ consists exactly of the vertices that are ancestors of some vertex in $A_i$, and $Z_i=\varphi_i\bigl(\dom(\varphi_i)\setminus A_i\bigr)$.
				In particular, $\varphi_i\bigl(\dom(\varphi_i)\bigr)=\varphi_i(A_i)\sqcup Z_i$.
				
				\item\label{invariant} For every $v\in A_i$ and every $z\in Z_i$,
				$$
				\dist_D(\varphi_i(v),z)>h_F(v).
				$$
			\end{enumerate}
			
			We first verify that $A_0, Z_0, \varphi_0$ satisfy properties~\ref{antichain}--\ref{invariant}. The component roots form an antichain and meet every path in $F$ from a component root to a leaf exactly once, so property~\ref{antichain} holds. Property~\ref{domain} follows from $\dom(\varphi_0)=A_0$ and $Z_0=\varnothing$. Property~\ref{invariant} holds vacuously since $Z_0=\varnothing$.
		
			Now suppose that, for some $k\ge0$, the sets $A_k$, $Z_k$, and the partial embedding $\varphi_k$ have been constructed and satisfy properties~\ref{antichain}--\ref{invariant}. If $\dom(\varphi_k)=V(F)$, then $\varphi_k$ is already the required embedding. We may therefore assume that $\dom(\varphi_k)\ne V(F)$. Our goal is to construct $A_{k+1}$, $Z_{k+1}$, and a partial embedding $\varphi_{k+1}$ extending $\varphi_k$, and to verify that they satisfy properties~\ref{antichain}--\ref{invariant}.
			
			\begin{claim}\label{A_k internal}
				$A_k$ contains an internal vertex.
			\end{claim}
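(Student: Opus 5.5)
We argue by contradiction: suppose every vertex of $A_k$ is a leaf of $F$, and show that then $\dom(\varphi_k)=V(F)$, contrary to our assumption. The only tools needed are property~\ref{antichain} and property~\ref{domain}.

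Let $w\in V(F)$ be arbitrary, and let $T$ be the component of $F$ containing $w$, with root $r_j$. Since $F$ is finite, we can follow children from $w$ until reaching a leaf $\ell$ that is a descendant of $w$. Then the unique $(r_j,\ell)$-path $P$ in $T$ passes through $w$. By property~\ref{antichain}, $P$ contains exactly one vertex $a\in A_k$.

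Under our assumption $a$ is a leaf. The only leaf on $P$ is its terminal vertex $\ell$, since every other vertex of $P$ has its successor on $P$ as a child. Hence $a=\ell$. Since $w$ lies on $P$, it is an ancestor of $\ell=a\in A_k$. By property~\ref{domain}, $\dom(\varphi_k)$ consists exactly of the ancestors of vertices of $A_k$, so $w\in\dom(\varphi_k)$.

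As $w$ was arbitrary, $\dom(\varphi_k)=V(F)$, a contradiction. Hence $A_k$ contains an internal vertex.

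The argument has no genuine obstacle. The only point needing care is that "every root-to-leaf path meets $A_k$" is applied to a path chosen through the given vertex $w$, which is why we first extend from $w$ down to a descendant leaf.
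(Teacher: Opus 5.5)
Your proof is correct and takes the same approach as the paper. Assuming every vertex of $A_k$ is a leaf, property~\ref{antichain} forces the unique $A_k$-vertex on each root-to-leaf path to be that path's terminal leaf. Property~\ref{domain} then places every vertex of $F$ in $\dom(\varphi_k)$, since every vertex lies on some root-to-leaf path; you state this last step explicitly, whereas the paper leaves it implicit.
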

			
			\begin{proof}
				Suppose not. Then every vertex of $A_k$ is a leaf. By property~\ref{antichain}, every leaf of $F$ belongs to $A_k$. It then follows from property~\ref{domain} that every vertex on a path in $F$ from a component root to a leaf belongs to $\dom(\varphi_k)$, and hence $\dom(\varphi_k)=V(F)$, a contradiction.
			\end{proof}
			
			We next choose an internal vertex in $A_k$ from which the embedding can be extended. Define an auxiliary digraph $H_k$ whose vertex set consists of the internal vertices in $A_k$. By Claim~\ref{A_k internal}, $V(H_k)$ is nonempty. For distinct $v,w\in V(H_k)$, put an arc $v\to w$ in $H_k$ whenever
			$$
			\dist_D(\varphi_k(v),\varphi_k(w))\le h_F(v).
			$$
			
			\begin{claim}\label{H_k acyclic}
				$H_k$ is acyclic.
			\end{claim}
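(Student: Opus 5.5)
We argue by contradiction. Suppose $H_k$ contains a directed cycle $v_1\to v_2\to\cdots\to v_m\to v_1$ with $m\ge2$ and the $v_j$ distinct internal vertices of $A_k$; indices are taken modulo $m$. By the definition of the arcs of $H_k$, for each $j$ there is a directed $(\varphi_k(v_j),\varphi_k(v_{j+1}))$-path $P_j$ in $D$ with $\|P_j\|\le h_F(v_j)$. Since $\varphi_k$ is injective and $v_j\ne v_{j+1}$, each $P_j$ has positive length.

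Concatenating $P_1,P_2,\ldots,P_m$ gives a closed directed walk in $D$ of positive length. Its length is at most
\[
\sum_{j=1}^m h_F(v_j)\le \sum_{v\in A_k} h_F(v)\le B(F).
\]
The last inequality holds because $\{v_1,\ldots,v_m\}\subseteq A_k$ is a subset of an antichain, hence itself an antichain by property~\ref{antichain}. Heights are nonnegative, so the sum over the subset is at most the sum over $A_k$, which is at most $B(F)$ by definition.

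By Proposition~\ref{walk-cycle}, this closed walk contains a directed cycle of length at most $B(F)$. This contradicts $g(D)>B(F)$, so $H_k$ is acyclic.

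The only point needing care is that the walk has positive length. This follows from distinctness of the images under the injective map $\varphi_k$, which rules out a degenerate walk. No further obstacle is expected.
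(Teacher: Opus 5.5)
Your proposal is correct and follows essentially the same argument as the paper: you concatenate the short $D$-paths around a cycle of $H_k$ into a closed walk of length at most the total height of an antichain, then invoke Proposition~\ref{walk-cycle} to contradict $g(D)>B(F)$. Your remark that the walk has positive length, by injectivity of $\varphi_k$, is a detail the paper leaves implicit, and your detour through $\sum_{v\in A_k}h_F(v)$ is harmless.
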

			
			\begin{proof}
				Suppose not, and let $v_1\to v_2\to\cdots\to v_t\to v_1$ be a directed cycle in $H_k$. For each $j\in[t]$, there is a $(\varphi_k(v_j),\varphi_k(v_{j+1}))$-path in $D$ of length at most $h_F(v_j)$, where the indices are taken modulo $t$. Concatenating these paths gives a closed directed walk in $D$ of length at most $\sum\limits_{j=1}^{t}h_F(v_j)$. Since $v_1,\ldots,v_t$ form an antichain of $F$, we have
				$$
				\sum_{j=1}^{t}h_F(v_j)\le B(F).
				$$
				By Proposition~\ref{walk-cycle}, $D$ contains a directed cycle of length at most $B(F)$, contradicting $g(D)>B(F)$.
			\end{proof}
			
			Since $H_k$ is acyclic, it has a source. Choose a source $v\in V(H_k)$, so that $d^-_{H_k}(v)=0$. Let $C_F(v)$ denote the set of children of $v$ in $F$.
			
			\begin{claim}\label{C_F}
				No vertex of $C_F(v)$ belongs to $\dom(\varphi_k)$.
			\end{claim}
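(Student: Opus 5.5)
This follows directly from properties~\ref{antichain} and~\ref{domain}, with no use of the digraph $D$. The plan is to argue by contradiction. Suppose some child $c\in C_F(v)$ lies in $\dom(\varphi_k)$. By property~\ref{domain}, $\dom(\varphi_k)$ consists exactly of the ancestors of vertices of $A_k$. Hence there is some $a\in A_k$ such that $c$ is an ancestor of $a$.

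Next we compare $a$ with $v$. Since $v$ is the parent of $c$, $v$ is an ancestor of $c$, and by transitivity $v$ is an ancestor of $a$. Two cases remain.

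\begin{itemize}
\item If $a\neq v$, then $v$ and $a$ are distinct comparable vertices of $A_k$. This contradicts the fact that $A_k$ is an antichain (property~\ref{antichain}).
\item If $a=v$, then $c$ is an ancestor of $v$ while $v$ is the parent of $c$. This is impossible in an out-arborescence: it would force a directed cycle $v\to c\to\cdots\to v$ in $F$, or equivalently it would force $c$ to lie strictly above and strictly below $v$ on the unique root-to-$c$ path.
\end{itemize}

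Either way we reach a contradiction, so $C_F(v)\cap\dom(\varphi_k)=\varnothing$.

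The only point needing care is the ancestor convention, since every vertex counts as its own ancestor. The case $c=a$ is already included in the argument and causes no trouble. There is no real obstacle: the claim is purely combinatorial about the forest $F$, and it holds for every internal vertex of $A_k$, not only the chosen source $v$ of $H_k$.
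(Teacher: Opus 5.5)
Your proof is correct and follows the paper's argument: a child $c\in C_F(v)$ in $\dom(\varphi_k)$ would be an ancestor of some $a\in A_k$, making $v$ an ancestor of $a$ and contradicting that $A_k$ is an antichain. Your separate treatment of the case $a=v$ makes explicit a step the paper's proof leaves implicit. In that case the antichain property gives no contradiction, and one needs instead that $c$ cannot be both a child and an ancestor of $v$.
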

			
			\begin{proof}
				Suppose that some $c\in C_F(v)$ belongs to $\dom(\varphi_k)$. By property~\ref{domain}, $c$ is an ancestor of some $a\in A_k$. Since $v$ is the parent of $c$, it follows that $v$ is an ancestor of $a$. As $v,a\in A_k$, this contradicts the fact that $A_k$ is an antichain by property~\ref{antichain}.
			\end{proof}
			
			By Claim~\ref{C_F}, $C_F(v)\cap A_k=\varnothing$. By property~\ref{domain}, $\varphi_k(v)\notin Z_k$. We now define
			$$
			A_{k+1}=(A_k\setminus\{v\})\sqcup C_F(v),
			\qquad
			Z_{k+1}=Z_k\sqcup\{\varphi_k(v)\}.
			$$
			We first verify property~\ref{antichain} for $A_{k+1}$.
			
			By Proposition~\ref{replace-antichain}, $A_{k+1}$ is an antichain. Let $P$ be a path in $F$ from a component root to a leaf. If $v\notin P$, then $P$ meets $A_{k+1}$ at the same unique vertex at which it meets $A_k$. If $v\in P$, then $P$ contains exactly one child of $v$, which replaces $v$ as the unique vertex of $A_{k+1}$ on $P$. Hence every such path contains exactly one vertex of $A_{k+1}$, and property~\ref{antichain} holds.
			
			\begin{claim}\label{embed}
				There are at least $|C_F(v)|$ out-neighbors of $\varphi_k(v)$ outside $\varphi_k(\dom(\varphi_k))$.
			\end{claim}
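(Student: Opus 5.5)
We need at least $|C_F(v)|$ out-neighbours of $\varphi_k(v)$ outside $\varphi_k(\dom(\varphi_k))=\varphi_k(A_k)\sqcup Z_k$. The plan is to count the forbidden out-neighbours and compare with $\delta^+(D)\ge L$. Since $v$ is internal, $h_F(v)\ge 1$. Property~\ref{invariant} gives $\dist_D(\varphi_k(v),z)>h_F(v)\ge1$ for every $z\in Z_k$, so no out-neighbour of $\varphi_k(v)$ lies in $Z_k$. It remains to control the out-neighbours lying in $\varphi_k(A_k\setminus\{v\})$.

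Let $w\in A_k\setminus\{v\}$ with $\varphi_k(v)\varphi_k(w)\in A(D)$, so $\dist_D(\varphi_k(v),\varphi_k(w))=1\le h_F(v)$.
\begin{itemize}
\item If $w$ is internal, then $v\to w$ would be an arc of $H_k$. This is allowed, since $v$ is only a source, so such $w$ may genuinely occur and must be counted.
\item If $w$ is a leaf, it may also occur.
\end{itemize}
So the crude bound is: the number of forbidden out-neighbours is at most $|A_k|-1$. Applying Proposition~\ref{antichain-leaves} to the antichain $A_{k+1}=(A_k\setminus\{v\})\sqcup C_F(v)$ gives
\[
|A_k|-1+|C_F(v)|\le L .
\]
Hence $\varphi_k(v)$ has at least $L-(|A_k|-1)\ge|C_F(v)|$ out-neighbours outside $\varphi_k(A_k)\sqcup Z_k$. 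Injectivity of $\varphi_k$ ensures that the images of distinct vertices of $A_k\setminus\{v\}$ are distinct, so each such image removes at most one candidate.

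The only subtle point is making sure $\varphi_k(v)$ itself and the $Z_k$-vertices are excluded. The vertex $\varphi_k(v)$ is excluded because $D$ has no loops. The $Z_k$-vertices are excluded by the invariant with $h_F(v)\ge1$, and this is exactly where internality of $v$ is used. I expect no real obstacle beyond this. The source property of $v$ in $H_k$ is not needed for this claim; it will matter only when verifying property~\ref{invariant} for $Z_{k+1}$ afterwards.
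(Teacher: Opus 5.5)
Your proposal is correct and matches the paper's proof essentially step for step. You exclude $Z_k$ via property~\ref{invariant} with $h_F(v)\ge 1$, note that $\varphi_k(v)$ is not its own out-neighbour, and bound the remaining blocked out-neighbours by $|A_k|-1$; you then compare $\delta^+(D)\ge L$ with $|C_F(v)|\le L-|A_k|+1$, which comes from Proposition~\ref{antichain-leaves} applied to $A_{k+1}$. You are also right that the source property of $v$ in $H_k$ is not used in this claim.
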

			
			\begin{proof}
				Since $A_{k+1}$ is an antichain, Proposition~\ref{antichain-leaves} gives $|A_{k+1}|\le L(F)$. Together with $A_{k+1}=(A_k\setminus\{v\})\sqcup C_F(v)$, this yields 
				$$
				|C_F(v)|\le L(F)-|A_k|+1.
				$$
				
				We next observe that $N_D^+(\varphi_k(v))\cap Z_k=\varnothing$. Otherwise, some $z\in Z_k$ is an out-neighbor of $\varphi_k(v)$, so $\dist_D(\varphi_k(v),z)=1\le h_F(v)$, contradicting property~\ref{invariant}.
				
				By property~\ref{domain}, $\varphi_k(\dom(\varphi_k))= \varphi_k(A_k)\sqcup Z_k$. Among the out-neighbors of $\varphi_k(v)$, none belongs to $Z_k$, and at most $|A_k|-1$ belong to $\varphi_k(A_k\setminus\{v\})$. Moreover, $\varphi_k(v)\notin N_D^+(\varphi_k(v))$. Therefore
				\begin{align*}
					\bigl|N_D^+(\varphi_k(v))
					\setminus\varphi_k(\dom(\varphi_k))\bigr|
					&\ge d_D^+(\varphi_k(v))-(|A_k|-1)\\
					&\ge L(F)-|A_k|+1\\
					&\ge |C_F(v)|. \qedhere
				\end{align*}
			\end{proof}
			
			By Claim~\ref{embed}, there are at least $|C_F(v)|$ out-neighbors of $\varphi_k(v)$ outside $\varphi_k(\dom(\varphi_k))$. Hence we may choose a set
			$$
			X=\{x_c:c\in C_F(v)\}
			\subseteq
			N_D^+(\varphi_k(v))\setminus\varphi_k(\dom(\varphi_k))
			$$
			with $|X|=|C_F(v)|$.
			
			We then define
			$$
			\dom(\varphi_{k+1})
			=
			\dom(\varphi_k)\sqcup C_F(v)
			$$
			and
			$$
			\varphi_{k+1}(u)=
			\begin{cases}
				\varphi_k(u), & u\in\dom(\varphi_k),\\
				x_u, & u\in C_F(v).
			\end{cases}
			$$
			By Claim~\ref{C_F}, this definition is well defined. Since the vertices in $X$ are distinct, lie outside $\varphi_k(\dom(\varphi_k))$, and are out-neighbors of $\varphi_k(v)$, the map $\varphi_{k+1}$ is a partial embedding extending $\varphi_k$.
			
			For verifying property~\ref{domain}, all ancestors of each $a\in A_k\setminus\{v\}$ already belong to $\dom(\varphi_k)$. For each $c\in C_F(v)$, the ancestors of $c$ are precisely the ancestors of $v$ together with $c$ itself. Hence $\dom(\varphi_{k+1})$ consists exactly of the vertices that are ancestors of some vertex in $A_{k+1}$. Moreover, $\dom(\varphi_{k+1})\setminus A_{k+1}=\bigl(\dom(\varphi_k)\setminus A_k\bigr)\sqcup\{v\}$. Therefore, using property~\ref{domain} for $\varphi_k$,
			$$
			\begin{aligned}
				\varphi_{k+1}\bigl(\dom(\varphi_{k+1})\setminus A_{k+1}\bigr)
				&=
				\varphi_k\bigl(\dom(\varphi_k)\setminus A_k\bigr)
				\sqcup\{\varphi_k(v)\}\\
				&=
				Z_k\sqcup\{\varphi_k(v)\}\\
				&=
				Z_{k+1}.
			\end{aligned}
			$$
			Thus property~\ref{domain} holds for $A_{k+1}$, $Z_{k+1}$, and $\varphi_{k+1}$.
			
			We now verify property~\ref{invariant}. Let $w\in A_{k+1}$ and $z\in Z_{k+1}$. We distinguish four cases.
			
			\medskip
			\noindent\textbf{Case 1.} $w\in A_k\setminus\{v\}$ and $z\in Z_k$.
			
			Since $\varphi_{k+1}$ extends $\varphi_k$, property~\ref{invariant} at step $k$ gives
			$\dist_D(\varphi_{k+1}(w),z)=\dist_D(\varphi_k(w),z)>h_F(w)$.
			
			\medskip
			\noindent\textbf{Case 2.} $w\in A_k\setminus\{v\}$ and $z=\varphi_k(v)$.
			
			If $w$ is internal, then $w\in V(H_k)$. Since $v$ is a source of $H_k$, there is no arc $w\to v$ in $H_k$. By the definition of $H_k$, $\dist_D(\varphi_k(w),\varphi_k(v))>h_F(w)$. If $w$ is a leaf, then $h_F(w)=0$, and since $\varphi_k$ is injective and $w\ne v$, we have $\dist_D(\varphi_k(w),\varphi_k(v))>0=h_F(w)$. Thus in either case, $\dist_D(\varphi_{k+1}(w),z)>h_F(w)$.
			
			\medskip
			\noindent\textbf{Case 3.} $w=c\in C_F(v)$ and $z\in Z_k$.
			
			Since $c$ is a child of $v$, $1+h_F(c)\le h_F(v)$. Suppose that $\dist_D(\varphi_{k+1}(c),z)\le h_F(c)$. Since $\varphi_k(v)\varphi_{k+1}(c)\in A(D)$, we obtain
			$\dist_D(\varphi_k(v),z)\le \dist_D(\varphi_k(v),\varphi_{k+1}(c))+\dist_D(\varphi_{k+1}(c),z)\le 1+h_F(c)\le h_F(v)$,
			contradicting property~\ref{invariant} at step $k$. Hence $\dist_D(\varphi_{k+1}(c),z)>h_F(c)$.
			
			\medskip
			

            \noindent\textbf{Case 4.} $w=c\in C_F(v)$ and $z=\varphi_k(v)$.
            
Suppose that
$\dist_D(\varphi_{k+1}(c),\varphi_k(v))\le h_F(c)$.
Then a $(\varphi_{k+1}(c),\varphi_k(v))$-path of length at most
$h_F(c)$, together with the arc $\varphi_k(v)\varphi_{k+1}(c)$,
forms a directed cycle of length at most
\[
1+h_F(c)\le h_F(v)\le B(F),
\]
contradicting $g(D)>B(F)$.
Therefore,
$\dist_D(\varphi_{k+1}(c),\varphi_k(v))>h_F(c)$.
			
			Hence properties~\ref{antichain}--\ref{invariant} hold for $A_{k+1}$, $Z_{k+1}$, and $\varphi_{k+1}$. Therefore, the construction can be repeated as long as $\dom(\varphi_k)\ne V(F)$.
			
			At each step, since $v$ is internal, $C_F(v)\ne\varnothing$, and hence
			$$
			\dom(\varphi_k)\subsetneq\dom(\varphi_{k+1}).
			$$
			Since $F$ is finite, the process terminates after finitely many steps. Thus, for some $m$,
			$$
			\dom(\varphi_m)=V(F).
			$$
			Consequently, $\varphi_m$ is an embedding of $F$ into $D$. Moreover, each $\varphi_{k+1}$ extends $\varphi_k$, so
			$$
			\varphi_m(r_j)=\varphi_0(r_j)=\rho_j
			\qquad\text{for every }j\in[s].
			$$
			This completes the proof.
		\end{proof}
		
		\subsection{Applications}\label{applications}
		
		\noindent
		
		We now derive several applications of Theorem~\ref{out-forest}.
		
		Let $A$ be an antichain of $F$. For each $v\in A$, choose a descendant leaf $\ell_v$ with $\dist_F(v,\ell_v)=h_F(v)$, and let $P_v$ be the unique $(v,\ell_v)$-path in $F$. The $h_F(v)$ vertices of $V(P_v)\setminus\{\ell_v\}$ are all internal vertices of $F$. Since the descendant subtrees rooted at distinct vertices of $A$ are vertex-disjoint, it follows that
		$$
		\sum_{v\in A}h_F(v)=\sum_{v\in A}|V(P_v)\setminus\{\ell_v\}|\le |V(F)|-L(F).
		$$
		Taking the maximum over all antichains $A$, we obtain
		$$
		B(F)\le |V(F)|-L(F).
		$$
	Combining the bound above with Theorem~\ref{out-forest}, we obtain the following general result for out-forests with given order and number of leaves.
		
		\begin{corollary}\label{coro1}
			Let $F$ be an out-forest on $n$ vertices with $L$ leaves and component roots $r_1,\ldots,r_s$. If a digraph $D$ satisfies
			$$
			\delta^+(D)\ge L
			\qquad\text{and}\qquad
			g(D)\ge n-L+1,
			$$
			then for any distinct vertices $\rho_1,\ldots,\rho_s\in V(D)$, there exists an embedding $\varphi:F\hookrightarrow D$ such that $\varphi(r_i)=\rho_i$ for every $i\in[s]$.
		\end{corollary}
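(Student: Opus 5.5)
This corollary follows by combining the inequality $B(F)\le |V(F)|-L(F)$, established just before the statement, with Theorem~\ref{out-forest}. We check the hypotheses of Theorem~\ref{out-forest} for $F$ and $D$. The out-degree condition $\delta^+(D)\ge L$ is assumed verbatim. For the digirth condition, we have $g(D)\ge n-L+1>n-L\ge B(F)$, so $g(D)>B(F)$. Theorem~\ref{out-forest} then gives, for any distinct $\rho_1,\ldots,\rho_s$, an embedding $\varphi$ with $\varphi(r_i)=\rho_i$.

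The only step that needs care is the bound $B(F)\le n-L$ itself, which the paper sketched. Fix an antichain $A$ and, for each $v\in A$, a longest downward path $P_v$ from $v$ to a leaf $\ell_v$. The set $V(P_v)\setminus\{\ell_v\}$ has exactly $h_F(v)$ vertices, and each of them is internal, since it has a child on $P_v$. For distinct $u,v\in A$, the descendant sets of $u$ and $v$ are disjoint. Indeed, a common descendant $w$ would place both $u$ and $v$ on the unique root-to-$w$ path, making them comparable. Hence the sets $V(P_v)\setminus\{\ell_v\}$ are pairwise disjoint subsets of the $n-L$ internal vertices, so $\sum_{v\in A}h_F(v)\le n-L$. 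Taking the maximum over $A$ gives $B(F)\le n-L$.

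There is no real obstacle here; the only point to watch is the strict inequality required by Theorem~\ref{out-forest}, and it is supplied exactly by the ``$+1$'' in the hypothesis $g(D)\ge n-L+1$.
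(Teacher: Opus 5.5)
Your proposal is correct and follows the paper's argument exactly: you bound $B(F)\le n-L$ by making the longest downward paths from the antichain vertices pairwise disjoint sets of internal vertices, and then apply Theorem~\ref{out-forest} with $g(D)\ge n-L+1>B(F)$. You also justify why the descendant sets of incomparable vertices are disjoint, a step the paper only asserts.
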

		
		We next turn to several special families of out-forests for which the parameter $B(F)$ can be determined explicitly. First, suppose that $F$ is the disjoint union of directed paths of lengths $m_1,\ldots,m_k\ge0$. Then $L(F)=k$ and $B(F)=\sum\limits_{i=1}^k m_i$. Applying Theorem~\ref{out-forest}, we obtain the following.
		
		\begin{corollary}\label{coro2}
			Let $k\ge1$ and let $m_1,\ldots,m_k$ be nonnegative integers. Let $D$ be a digraph satisfying
			$$
			\delta^+(D)\ge k
			\qquad\text{and}\qquad
			g(D)>\sum_{i=1}^k m_i.
			$$
			Then, for any distinct vertices $x_1,\ldots,x_k\in V(D)$, there exist pairwise vertex-disjoint directed paths $P_1,\ldots,P_k$ such that, for every $i\in[k]$, the path $P_i$ starts at $x_i$ and has length $m_i$.
		\end{corollary}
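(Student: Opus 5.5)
This is a direct specialization of Theorem~\ref{out-forest}. I would take $F$ to be the disjoint union of $k$ directed paths $Q_1,\ldots,Q_k$, where $Q_i$ has length $m_i$ and root $r_i$, its initial vertex. A path of length $0$ is a single isolated vertex, which by the paper's convention counts as a root and as a leaf. Setting $\rho_i=x_i$, Theorem~\ref{out-forest} will produce the embedding, and the images $\varphi(Q_i)$ are then the required paths $P_i$. These paths are pairwise vertex-disjoint because $\varphi$ is injective, $P_i$ starts at $\varphi(r_i)=x_i$, and $P_i$ has length $m_i$ because arcs are preserved.

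It remains to check the hypotheses, namely $L(F)=k$ and $B(F)=\sum_i m_i$. Each component $Q_i$ has exactly one vertex of out-degree zero: its terminal vertex, or the isolated root when $m_i=0$. Hence $L(F)=k$, and $\delta^+(D)\ge k=L(F)$.

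For $B(F)$, I would use two observations. First, vertices in the same path $Q_i$ are pairwise comparable, so an antichain contains at most one vertex from each $Q_i$. Second, the height of a vertex of $Q_i$ is its distance to the terminal vertex, which is at most $h_F(r_i)=m_i$. Therefore every antichain has total height at most $\sum_i m_i$. The set of roots $\{r_1,\ldots,r_k\}$ is an antichain, since vertices in distinct components are incomparable, and it attains this bound. So $B(F)=\sum_i m_i<g(D)$.

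There is no real obstacle here. The only point needing care is the degenerate case $m_i=0$. There the isolated root counts as a leaf, so $L(F)$ is still exactly $k$, and the roots $\rho_i=x_i$ are distinct as Theorem~\ref{out-forest} requires.
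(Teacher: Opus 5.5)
Your proposal is correct and is the paper's argument: the paper also takes $F$ to be the disjoint union of directed paths of lengths $m_1,\ldots,m_k$, notes $L(F)=k$ and $B(F)=\sum_{i=1}^k m_i$, and applies Theorem~\ref{out-forest} with $\rho_i=x_i$. You additionally supply the verification of these two values (an antichain meets each path at most once, heights are at most $m_i$, the roots attain the bound, and the isolated-root case $m_i=0$ still counts as a leaf), which the paper only states.
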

		
		We next consider subdivisions of out-stars. Let $F$ consist of $k\ge2$ internally vertex-disjoint directed paths $P_1,\ldots,P_k$ sharing exactly their initial vertex $r$, where $P_i$ has length $m_i\ge1$ for every $i\in[k]$. Clearly, $L(F)=k$. We claim that
		$$
		B(F)=\max\left\{\max_{i\in[k]}m_i,\,\sum_{i=1}^k(m_i-1)\right\}.
		$$
		Indeed, let $A$ be an antichain of $F$. If $r\in A$, then $A=\{r\}$, since every other vertex of $F$ is a descendant of $r$. Hence
		$$
		\sum_{v\in A}h_F(v)=h_F(r)=\max_{i\in[k]}m_i.
		$$
		If $r\notin A$, then $A$ contains at most one vertex from each directed path $P_i$, since any two vertices on the same directed path are comparable. For each $i\in[k]$, the maximum height of a vertex of $P_i$ other than $r$ is $m_i-1$, attained by the successor of $r$ on $P_i$. Hence
		$$
		\sum_{v\in A}h_F(v)\le\sum_{i=1}^k(m_i-1).
		$$
		Equality holds when $A$ consists of the successors of $r$ on $P_1,\ldots,P_k$. This proves the claimed formula for $B(F)$.
		
		Therefore, Theorem~\ref{out-forest} yields the following result.
		
		\begin{corollary}\label{coro3}
			Let $F$ consist of $k\ge2$ internally vertex-disjoint directed paths $P_1,\ldots,P_k$ sharing exactly their initial vertex, where $P_i$ has length $m_i\ge1$ for every $i\in[k]$. If a digraph $D$ satisfies
			$$
			\delta^+(D)\ge k
			\qquad\text{and}\qquad
			g(D)>\max\left\{\max_{i\in[k]}m_i,\,\sum_{i=1}^k(m_i-1)\right\},
			$$
			then for every vertex $x\in V(D)$, $D$ contains a copy of $F$ with center $x$.
		\end{corollary}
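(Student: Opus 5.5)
This is a direct specialization of Theorem~\ref{out-forest}, and the proof will consist of checking that theorem's hypotheses. Take $F$ to be the out-arborescence consisting of the paths $P_1,\ldots,P_k$ glued at their common initial vertex $r$. Then $F$ has a single component root, $s=1$, and I will prescribe $\rho_1=x$. The embedding $\varphi$ given by Theorem~\ref{out-forest} is injective, maps $r$ to $x$, and preserves every arc, so its image is a copy of $F$ with center $x$.

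Two hypotheses need verifying. First, the leaf count: since each $m_i\ge1$, the leaves of $F$ are exactly the terminal vertices of $P_1,\ldots,P_k$, and these are distinct. Hence $L(F)=k$, and the assumption $\delta^+(D)\ge k$ gives $\delta^+(D)\ge L(F)$. Second, the digirth condition: by the computation preceding the statement,
\[
B(F)=\max\left\{\max_{i\in[k]}m_i,\,\sum_{i=1}^k(m_i-1)\right\},
\]
so the assumed bound on $g(D)$ is exactly $g(D)>B(F)$.

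That computation splits on whether $r$ lies in the antichain $A$. If $r\in A$, then $A=\{r\}$, which contributes $h_F(r)=\max_i m_i$. If $r\notin A$, then $A$ contains at most one vertex from each $P_i$, and such a vertex has height at most $m_i-1$. The value $\sum_i(m_i-1)$ is attained by taking the successors of $r$ on the paths.

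Applying Theorem~\ref{out-forest} then finishes the proof. There is no real obstacle. The only point needing care is that $m_i\ge1$, which rules out degenerate paths and ensures that $r$ itself is not a leaf, so that $L(F)=k$ exactly.
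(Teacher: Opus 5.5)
Your proposal is correct and follows the paper's argument: you apply Theorem~\ref{out-forest} with $s=1$ and $\rho_1=x$, note that $L(F)=k$, and compute $B(F)$ by the same antichain case split on whether $r\in A$. This identifies $B(F)$ with the stated maximum.
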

		
		We finally consider the digraph $S_k^{(\ell)}$ studied by Picasarri-Arrieta and Rambaud~\cite{picasarri2024}. Recall that $S_k^{(\ell)}$ consists of $k$ internally vertex-disjoint directed paths of length $\ell$ sharing exactly their initial vertex. Taking $m_1=\cdots=m_k=\ell$ in Corollary~\ref{coro3}, we immediately obtain the following.
		
		\begin{corollary}\label{coro4}
			Let $k,\ell\ge2$, and let $D$ be a digraph satisfying
			$$
			\delta^+(D)\ge k
			\qquad\text{and}\qquad
			g(D)\ge k(\ell-1)+1.
			$$
			Then, for every vertex $x\in V(D)$, $D$ contains a copy of $S_k^{(\ell)}$ with center $x$.
		\end{corollary}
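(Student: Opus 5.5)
The plan is to derive Corollary~\ref{coro4} directly from Corollary~\ref{coro3} with $m_1=\cdots=m_k=\ell$. The out-forest $F=S_k^{(\ell)}$ is a single out-arborescence rooted at the center. Its leaves are the $k$ path endpoints, so $L(F)=k$, and the hypothesis $\delta^+(D)\ge k$ is exactly the out-degree condition required there.

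It then remains to check the digirth condition. By the formula for $B(F)$ established just before Corollary~\ref{coro3}, we get
\[
B(F)=\max\{\ell,\;k(\ell-1)\}.
\]
The key step is comparing these two quantities. Since $k\ge2$ and $\ell\ge2$, we have
\[
k(\ell-1)\ge 2(\ell-1)=\ell+(\ell-2)\ge \ell,
\]
and hence $B(F)=k(\ell-1)$. The hypothesis $g(D)\ge k(\ell-1)+1$ is therefore equivalent to $g(D)>B(F)$.

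Applying Corollary~\ref{coro3}, or equivalently Theorem~\ref{out-forest} with $s=1$ and $\rho_1=x$, yields an embedding of $F$ whose root is mapped to $x$. This is a copy of $S_k^{(\ell)}$ with center $x$.

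There is no real obstacle. The only point needing care is the inequality $\ell\le k(\ell-1)$, which is where both $k\ge2$ and $\ell\ge2$ are used; for $\ell=1$ the maximum would instead be $\ell$. The injectivity of the embedding guarantees that the $k$ paths share only the center, as required.
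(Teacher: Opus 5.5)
Your proposal is correct and follows the paper's route exactly: the paper likewise specializes Corollary~\ref{coro3} to $m_1=\cdots=m_k=\ell$. Your explicit check that $\max\{\ell,\,k(\ell-1)\}=k(\ell-1)$ for $k,\ell\ge2$ is a step the paper leaves implicit in its ``immediately.''
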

		
		In~\cite{chen2026}, Chen et al. constructed a family of digraphs to establish a lower bound for subdivisions of $C(k,k)$. By modifying their construction, we obtain a family of digraphs showing that the digirth bounds in Corollaries~\ref{coro2} and~\ref{coro4} are sharp.
		
		\medskip
		\noindent\textbf{Construction $D(m_1,\ldots,m_k)$.}
		
		Let $k\ge1$ and let $m_1,\ldots,m_k$ be nonnegative integers with $M=m_1+\cdots+m_k\ge2$. After relabeling, assume that $m_1,\ldots,m_r>0$ and $m_{r+1}=\cdots=m_k=0$.
		
		For each $t\in\{0,\ldots,M-1\}$, take a directed cycle $C_t$ of length $M$ and an independent set $V_t$ of size $k-1$, where all these cycles and sets are pairwise vertex-disjoint. Put $B_t=V(C_t)\sqcup V_t$, and take the index $t$ modulo $M$ throughout the construction.
		
		For each $t$, we first construct a set $X_t$ of $k$ vertices that will serve as the common out-neighborhood of the vertices in $V_t$. If $r\ge2$, choose vertices $x_{t,1},\ldots,x_{t,r}$ on $C_t$ in cyclic order such that $\|C_t[x_{t,i},x_{t,i+1}]\|=m_i$ for $i\in[r-1]$ and $\|C_t[x_{t,r},x_{t,1}]\|=m_r$. When $r=1$, choose an arbitrary vertex $x_{t,1}\in V(C_t)$. For each $i\in\{r+1,\ldots,k\}$, choose a distinct vertex $x_{t,i}\in V_{t+1}$; this is possible since $k-r\le |V_{t+1}|=k-1$. Set $X_t=\{x_{t,1},\ldots,x_{t,k}\}$.
		
		Add all arcs from $V(C_t)$ to $V_{t+1}$ and all arcs from $V_t$ to $X_t$ for every $t$. There are no other arcs. Denote the resulting digraph by $D(m_1,\ldots,m_k)$ ($D(2,1,0)$ is shown in Figure~\ref{fig:D-construction}).

\begin{figure}[htbp]
\centering
\resizebox{0.60\textwidth}{!}{%
\begin{tikzpicture}[
    x=1cm,y=1cm,
    >=Stealth,
    vertex/.style={
        circle,draw,fill=white,
        minimum size=5pt,inner sep=0pt
    },
    arc/.style={->,thin},
    complete/.style={
        ->,blue!65!black,line width=1.2pt
    },
    cross/.style={
        ->,orange!80!black,thin
    },
    group/.style={
        draw=gray!65,dashed,rounded corners,
        inner sep=12pt
    },
    every node/.style={font=\small}
]

\foreach \t in {0,1,2}{
    \pgfmathsetmacro{\xx}{4.2*\t}

    \node[vertex,label=above left:{$x_{\t,1}$}]
        (p\t) at (\xx,2.5) {};
    \node[vertex,label=above right:{$a_{\t}$}]
        (a\t) at (\xx+1.5,2.5) {};
    \node[vertex,label=below right:{$x_{\t,2}$}]
        (q\t) at (\xx+0.75,1.3) {};

    \draw[arc] (p\t) -- (a\t);
    \draw[arc] (a\t) -- (q\t);
    \draw[arc] (q\t) -- (p\t);

    \node[vertex,label=below:{$u_{\t}$}]
        (u\t) at (\xx,0) {};
    \node[vertex,label=below:{$v_{\t}$}]
        (v\t) at (\xx+1.5,0) {};

    \draw[arc] (u\t) -- (p\t);
    \draw[arc] (u\t) -- (q\t);
    \draw[arc] (v\t) -- (q\t);
    \draw[arc] (v\t) to[bend right=35] (p\t);

    \node[group,fit=(p\t)(a\t)(q\t)] (C\t) {};
    \node[group,fit=(u\t)(v\t)] (V\t) {};

    \node[above=5pt] at (C\t.north) {$C_{\t}$};
    \node[below=15pt] at (V\t.south) {$V_{\t}$};
}

\foreach \t/\s in {0/1,1/2}{
    \draw[complete]
        (C\t.east)
        to[out=0,in=90]
        (V\s.north);

    \draw[cross]
        (u\t) to[out=-35,in=-150] (u\s);
    \draw[cross]
        (v\t) to[out=-20,in=-130] (u\s);
}

\draw[complete]
    (C2.east)
    -- ++(0.55,0)
    |- (5,4.0)
    -| (-0.9,0)
    -- (V0.west);

\draw[cross]
    (u2.south)
    .. controls ($(u2.south)+(0,-1.6)$)
             and ($(u0.south)+(0.3,-1.6)$)
    .. (u0.south east);

\draw[cross]
    (v2.south)
    .. controls ($(v2.south)+(0,-2.0)$)
             and ($(u0.south)+(-0.3,-2.0)$)
    .. (u0.south west);

\end{tikzpicture}%
}
\caption{The construction $D(2,1,0)$, with
$X_t=\{x_{t,1},x_{t,2},u_{t+1}\}$.
Each thick blue arrow represents all arcs from its initial
vertex set to its terminal vertex set.
Orange arrows represent the arcs from $V_t$ to $u_{t+1}$.
Indices are taken modulo $3$.}
\label{fig:D-construction}
\end{figure}

		\begin{claim}\label{claim5}
			The digraph $D(m_1,\ldots,m_k)$ satisfies
			$$
			\delta^+\bigl(D(m_1,\ldots,m_k)\bigr)=k,
			\qquad
			g\bigl(D(m_1,\ldots,m_k)\bigr)=M.
			$$
		\end{claim}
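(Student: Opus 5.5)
I would verify the two equalities separately. The out-degree part is direct bookkeeping; the digirth part needs a level (potential) argument showing that every directed cycle either lies in some $C_t$ or wraps all the way around the index set $\mathbb{Z}_M$.

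\emph{Out-degree.} A vertex of $C_t$ has exactly one out-neighbour on $C_t$ and, in addition, all $k-1$ vertices of $V_{t+1}$. These sets are disjoint, so its out-degree is $k$. A vertex of $V_t$ has out-neighbourhood exactly $X_t$. The vertices $x_{t,1},\dots,x_{t,r}$ are distinct vertices of $C_t$: when $r\ge2$, consecutive gaps are $m_i\ge1$ and the gaps sum to $M=\|C_t\|$. The vertices $x_{t,r+1},\dots,x_{t,k}$ are distinct vertices of $V_{t+1}$. Hence $|X_t|=k$. There are no loops, and the only arcs are those listed, so $\delta^+=k$.

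\emph{Digirth, upper bound.} Each $C_t$ is a directed cycle of length $M$, so $g\le M$.

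\emph{Digirth, lower bound.} Let $C$ be any directed cycle.
\begin{itemize}
\item Arcs from $B_t$ either stay inside $B_t$ (those inside $C_t$, or from $V_t$ into $X_t\cap V(C_t)$) or go to $B_{t+1}$. The latter are the arcs $V(C_t)\to V_{t+1}$ and the arcs $V_t\to X_t\cap V_{t+1}$.
\item Inside $B_t$, the only arcs entering $V_t$ come from $C_{t-1}\subseteq B_{t-1}$. So $V_t$ has no in-neighbour within $B_t$, and any cycle contained in $B_t$ avoids $V_t$, hence equals $C_t$, of length $M$.
\item Otherwise $C$ uses an arc from some $B_t$ to $B_{t+1}$. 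Since the block index never decreases except modulo $M$, $C$ must pass through all $M$ blocks $B_0,\dots,B_{M-1}$ and return. It therefore uses at least $M$ inter-block arcs, so $\|C\|\ge M$.
\end{itemize}

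The main point to state carefully is the last step: the block-index function increases by $0$ or $1$ along each arc, so a closed walk leaving its block must make a net change of a nonzero multiple of $M$. That forces at least $M$ increments, each contributed by a distinct arc. This gives $g=M$ exactly. The case $M\ge2$ ensures that the $C_t$ are genuine cycles, with no loops.
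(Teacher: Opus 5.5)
Your proof is correct and follows the paper's argument: the same out-degree count, the observation that $V_t$ has no in-neighbour in $B_t$ (so $C_t$ is the only cycle inside a block), and the fact that inter-block arcs go only from $B_t$ to $B_{t+1}$, which your block-index potential makes slightly more precise than the paper's ``must visit all $M$ blocks''. One small slip: arcs entering $V_t$ also come from $V_{t-1}$ (namely the arcs into $x_{t-1,i}$ for $i>r$), not only from $C_{t-1}$, but since $V_{t-1}\subseteq B_{t-1}$ your conclusion that $V_t$ has no in-neighbour in $B_t$ is unaffected.
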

		
		\begin{proof}
			Every vertex of $C_t$ has exactly its successor on $C_t$ and the $k-1$ vertices of $V_{t+1}$ as its out-neighbors, while every vertex of $V_t$ has exactly the $k$ vertices of $X_t$ as its out-neighbors. Hence every vertex of $D(m_1,\ldots,m_k)$ has out-degree $k$, and therefore $\delta^+\bigl(D(m_1,\ldots,m_k)\bigr)=k$.
			
			Since each $C_t$ is a directed cycle of length $M$, we have $g\bigl(D(m_1,\ldots,m_k)\bigr)\le M$. It remains to show that every directed cycle has length at least $M$. A directed cycle contained in a single block $B_t$ must be $C_t$, since no vertex of $V_t$ has an in-neighbor in $B_t$. Moreover, every arc joining distinct blocks goes from $B_t$ to $B_{t+1}$. Hence any directed cycle using an arc between distinct blocks must visit all $M$ blocks before returning to its initial block, and therefore has length at least $M$. Thus $g\bigl(D(m_1,\ldots,m_k)\bigr)=M$.
		\end{proof}
		
		\begin{claim}\label{claim6}
			For every $t\in\{0,\ldots,M-1\}$, there do not exist pairwise vertex-disjoint directed paths $P_1,\ldots,P_k$ in $D(m_1,\ldots,m_k)$ such that $P_i$ starts at $x_{t,i}$ and has length $m_i$ for every $i\in[k]$. Consequently, the digirth bound in Corollary~\ref{coro2} is sharp whenever $m_1+\cdots+m_k\ge2$.
		\end{claim}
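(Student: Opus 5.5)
We argue by contradiction and use only the local structure of $D(m_1,\ldots,m_k)$ around the block $B_t$. The idea is a pigeonhole count on $V_{t+1}$: the $r$ paths starting on $C_t$ are all forced into $V_{t+1}$, but the length-zero paths already occupy all but $r-1$ of its vertices.

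Suppose that for some $t$ there are pairwise vertex-disjoint paths $P_1,\ldots,P_k$ with $P_i$ starting at $x_{t,i}$ and $\|P_i\|=m_i$. For $i\in\{r+1,\ldots,k\}$ we have $m_i=0$, so $P_i$ is the single vertex $x_{t,i}\in V_{t+1}$. These are $k-r$ distinct vertices of $V_{t+1}$. Since $|V_{t+1}|=k-1$, at most $r-1$ vertices of $V_{t+1}$ remain available to $P_1,\ldots,P_r$.

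Next we show that each $P_i$ with $i\le r$ must visit a vertex of $V_{t+1}$. By construction, every vertex of $C_t$ has exactly two kinds of out-neighbours: its successor on $C_t$, and the vertices of $V_{t+1}$. So $P_i$, which starts at $x_{t,i}\in V(C_t)$, either follows $C_t$ for all of its $m_i$ arcs, or at some first moment leaves $C_t$ via an arc into $V_{t+1}$. We rule out the first alternative in two cases.
\begin{itemize}
\item If $r\ge2$, following $C_t$ for $m_i$ arcs ends at $x_{t,i+1}$ (indices modulo $r$), because $\|C_t[x_{t,i},x_{t,i+1}]\|=m_i$. But $x_{t,i+1}$ is the initial vertex of $P_{i+1}\neq P_i$, contradicting disjointness.
\item If $r=1$, then $m_1=M=|V(C_t)|$. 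Following $C_t$ for $M$ arcs returns to $x_{t,1}$, so $P_1$ would not be a path.
\end{itemize}
Hence every $P_i$ with $i\le r$ contains a vertex $y_i\in V_{t+1}$, namely the head of its first arc leaving $C_t$.

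By disjointness, the vertices $y_1,\ldots,y_r$ are pairwise distinct and differ from $x_{t,r+1},\ldots,x_{t,k}$. This requires $r$ vertices of $V_{t+1}$ outside the $k-r$ occupied ones, i.e.\ $r+(k-r)=k>k-1=|V_{t+1}|$, a contradiction. Note that what $P_i$ does after reaching $y_i$ is irrelevant to the argument.

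For sharpness, Claim~\ref{claim5} gives $\delta^+=k$ and $g=M=\sum_i m_i$. So this digraph satisfies the out-degree hypothesis of Corollary~\ref{coro2} and misses the digirth hypothesis $g>\sum_i m_i$ by exactly one, yet the conclusion fails for the prescribed vertices $x_{t,1},\ldots,x_{t,k}$. The only step needing care is the $r=1$ case, where the obstruction is that $P_1$ would close into a cycle rather than collide with another path's start.
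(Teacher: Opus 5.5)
Your proposal is correct and follows the paper's proof essentially step for step. The length-zero paths occupy $k-r$ vertices of $V_{t+1}$; each $P_i$ with $i\le r$ is forced off $C_t$, colliding with the next start when $r\ge2$ and closing into a cycle when $r=1$; its first vertex off $C_t$ lies in $V_{t+1}$, which gives the same pigeonhole contradiction, and your explicit use of Claim~\ref{claim5} ($\delta^+=k$, $g=M$) for the sharpness consequence spells out what the paper leaves implicit.
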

		
		\begin{proof}
			Fix $t$ and suppose that such paths $P_1,\ldots,P_k$ exist. For every $i>r$, we have $m_i=0$, so $P_i$ consists only of the vertex $x_{t,i}\in V_{t+1}$.
			
			We claim that each $P_i$ with $i\in[r]$ must leave $C_t$. Otherwise, suppose that $P_i\subseteq C_t$ for some $i\in[r]$. If $r\ge2$, then $P_i$ ends at $x_{t,i+1}$ when $i<r$, and at $x_{t,1}$ when $i=r$. In either case, $P_i$ meets the starting vertex of another path, a contradiction. If $r=1$, then $m_1=M=\|C_t\|$, so $P_1$ cannot be contained in $C_t$ and must leave $C_t$. Hence every $P_i$ with $i\in[r]$ leaves $C_t$.
			
			By the construction of $D(m_1,\ldots,m_k)$, the first vertex of $P_i$ outside $C_t$ lies in $V_{t+1}$. The $k-r$ paths of length zero already occupy the distinct vertices $x_{t,r+1},\ldots,x_{t,k}$ of $V_{t+1}$, leaving only
			$$
			|V_{t+1}|-(k-r)=(k-1)-(k-r)=r-1
			$$
			available vertices in $V_{t+1}$. However, the $r$ paths $P_1,\ldots,P_r$ must enter $V_{t+1}$ at distinct vertices, since they are pairwise vertex-disjoint. This is impossible.
		\end{proof}
		
		For $k,\ell\ge2$, let $D_{k,\ell}$ be the digraph $D(m_1,\ldots,m_k)$ with $m_i=\ell-1$ for every $i\in[k]$, so that $M=k(\ell-1)$. By Claim~\ref{claim5},
		$$
		\delta^+(D_{k,\ell})=k,
		\qquad
		g(D_{k,\ell})=k(\ell-1).
		$$
		
		\begin{claim}\label{claim7}
			  In $D_{k,\ell}$, no vertex in $\bigcup\limits_{t=0}^{M-1} V_t$ is the center of a copy of $S_k^{(\ell)}$. Consequently, the digirth bound in Corollary~\ref{coro4} is sharp.
		\end{claim}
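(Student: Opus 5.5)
Fix $t$ and a vertex $u\in V_t$, and suppose for contradiction that $D_{k,\ell}$ contains a copy of $S_k^{(\ell)}$ with center $u$. Since $d^+(u)=k$ and $N^+(u)=X_t$, the $k$ paths of the copy must start with the $k$ distinct arcs $u x_{t,1},\ldots,u x_{t,k}$. Hence after deleting $u$ we obtain $k$ pairwise vertex-disjoint directed paths $P_1,\ldots,P_k$, where $P_i$ starts at $x_{t,i}$ and has length $\ell-1=m_i$. The paths also avoid $u$, but we will not need this. The idea is to reduce to Claim~\ref{claim6} and argue directly.

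For $D_{k,\ell}$ we have $m_i=\ell-1\ge1$ for all $i$, so $r=k$. Therefore all $x_{t,1},\ldots,x_{t,k}$ lie on $C_t$, in cyclic order, with consecutive gaps $\|C_t[x_{t,i},x_{t,i+1}]\|=\ell-1$, and $M=k(\ell-1)$. The existence of $P_1,\ldots,P_k$ contradicts Claim~\ref{claim6} applied to this $t$, and that finishes the argument.

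To be safe, I would recall the mechanism of Claim~\ref{claim6} in this special case. If $P_i\subseteq C_t$, then $P_i=C_t[x_{t,i},x_{t,i+1}]$ (indices mod $k$), which ends at the start of $P_{i+1}$; this is impossible when $k\ge2$. So each $P_i$ leaves $C_t$, and its first vertex outside $C_t$ lies in $V_{t+1}$, since the out-neighbours of vertices of $C_t$ are their successor on $C_t$ and $V_{t+1}$. Disjointness then requires $k$ distinct vertices in $V_{t+1}$, but $|V_{t+1}|=k-1$, a contradiction. Since $t$ and $u$ were arbitrary, no vertex of $\bigcup_t V_t$ is a center.

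For sharpness, $D_{k,\ell}$ has $\delta^+=k$ and $g=k(\ell-1)$ by Claim~\ref{claim5}, yet it fails the conclusion of Corollary~\ref{coro4} for any $x\in V_0$. Thus the digirth requirement $k(\ell-1)+1$ cannot be lowered. The only step needing care is that the copy must use every out-neighbour of $u$; this follows because the $k$ paths are internally disjoint, so their second vertices are distinct, and $u$ has exactly $k$ out-neighbours.
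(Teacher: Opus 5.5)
Your proposal is correct and follows the paper's own proof. Deleting the center forces the $k$ paths to use all of $X_t$, which yields $k$ disjoint paths of length $\ell-1$ from $x_{t,1},\ldots,x_{t,k}$ and contradicts Claim~\ref{claim6}; your unpacking of Claim~\ref{claim6} in the case $r=k$ and the sharpness deduction via Claim~\ref{claim5} are both accurate.
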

		
		\begin{proof}
			Fix $v\in V_t$ and suppose that $v$ is the center of a copy of $S_k^{(\ell)}$. Since $N^+(v)=X_t=\{x_{t,1},\ldots,x_{t,k}\}$, the $k$ directed paths starting at $v$ must use all vertices of $X_t$ as their second vertices. Deleting $v$ yields $k$ pairwise vertex-disjoint directed paths of length $\ell-1$ starting at $x_{t,1},\ldots,x_{t,k}$, contradicting Claim~\ref{claim6}.
		\end{proof}

		\section{Remarks and discussions}
Fixed a digraph $H$, define the {\it girth-restricted Mader number} $\text{mader}^{(g)}_{\delta^+}(H)$ to be the least integer $d$ such that every digraph $D$ with $\delta^+(D)\ge d$ and  $g(D)\ge g$ contains a subdivision of $H$. 
In this paper, we prove that $\text{mader}^{(k+1)}_{\delta^+}(C(k,k))=2$. This implies that $\text{mader}^{(k)}_{\delta^+}(C(k,k))\ge 3$. A natural further question is to determine $\text{mader}^{(k)}_{\delta^+}(C(k,k))$.

Let $\Theta(\ell_1,\ldots,\ell_t)$ consist of $t$ internally disjoint directed $(s,z)$-paths with the indicated lengths (see Figure~\ref{fig:theta-construction}).

\begin{figure}[htbp]
\begin{center}
\resizebox{.36\linewidth}{!}{
\begin{tikzpicture}[x=1cm,y=1cm,>=Stealth,every node/.style={font=\normalsize},dot/.style={circle,fill=black,inner sep=1.8pt}]
\coordinate (s) at (-2.25,0);\coordinate (t) at (2.25,0);
\draw[thick,->] (s) .. controls (-1.4,2) and (1.4,2) .. (t);
\draw[thick,->] (s) .. controls (-1.4,.7) and (1.4,.7) .. (t);
\draw[thick,->] (s) .. controls (-1.4,-2) and (1.4,-2) .. (t);
\node at (0,1.72) {$\ell_1$};\node at (0,.72) {$\ell_2$};\node at (0,-1.72) {$\ell_t$};
\node at (0,-.4) {$\vdots$};
\node[dot,label=left:$s$] at (s) {};\node[dot,label=right:$z$] at (t) {};
\end{tikzpicture}
}
\end{center}
 \caption{$\Theta(\ell_1,\ldots,\ell_t)$}
\label{fig:theta-construction}
\end{figure}

\begin{problem}
Determine $\mader_{\delta^+}(\Theta(\ell_1,\ldots,\ell_t))$ for $t\ge3$.
\end{problem}

Let $n=|V(\Theta(\ell_1,\ldots,\ell_t))|=2+\sum_{i=1}^t(\ell_i-1)$. 
Clearly, the complete digraph on $n-1$ vertices has minimum out-degree $n-2$ and cannot contain a subdivision of $\Theta(\ell_1\ldots,\ell_t)$. Hence
\[
\mader_{\delta^+}\bigl(\Theta(\ell_1,\ldots,\ell_t)\bigr)
\ge |V(\Theta)|-1=1+\sum_{i=1}^t(\ell_i-1).
\]
For $t=2$ and $\ell_1,\ell_2\ge2$, equality holds.
\begin{problem}
When is this lower bound attained for $t\ge3$? 
\end{problem}


\subsection*{Acknowledgements} This work was supported by the National Key R\&D Program of China (2023YFA1010202, 2023YFA1010203), the
National Natural Science Foundation of China (12471336, 12501473), the Quantum Science and Technology-National Science and Technology Major Project(2021ZD0302902).

\subsection*{Declaration on the use of AI}
During the preparation of this article, the authors used ChatGPT-4o (OpenAI, used October 2026) to assist in discussing proof strategies, improving the exposition, and producing two schematic figures (TikZ code). The authors have reviewed and verified all proofs, arguments, and figures, and take full responsibility for the content of this publication.

		\vskip 3mm
	\end{spacing}
	
\end{document}